\documentclass[11pt]{article}

\usepackage{mathrsfs}
\usepackage{kbordermatrix}
\usepackage{enumerate}
\usepackage[section]{algorithm}
\usepackage{algorithmic}
\usepackage{multirow}
\usepackage{xcolor}

\usepackage{graphicx}
\usepackage{amsmath,amsfonts,amsthm,amsbsy,amssymb}
\usepackage{amssymb,latexsym}

\usepackage{slashbox}

\usepackage{hyperref}

\usepackage{cleveref}

\def\bu{\pmb{u}}
\def\bv{\pmb{v}}

\def\by{\pmb{y}}

\def\bbC{\mathbb{C}}
\def\bbI{\mathbb{I}}

\def\bbR{\mathbb{R}}

\def\scrG{\mathscr{G}}
\def\scrH{\mathscr{H}}

\def\scrR{\mathscr{R}}

\def\cR{{\cal R}}

\def\jbsvd{{\sc jbsvd}}
\def\jsvd{{\sc jsvd}}

\def\pjbsvd{{\sc pjbsvd}}
\def\pjsvd{{\sc pjsvd}}

\newcommand\STM[2]{{\rm St}(#1,#2)}

\def\wtd{\widetilde}
\def\what{\widehat}

\usepackage{accents}

\DeclareMathOperator{\BDiag}{BDiag}

\DeclareMathOperator{\diag}{diag}
\DeclareMathOperator{\dist}{dist}

\DeclareMathOperator*{\opt}{opt}
\DeclareMathOperator{\rank}{rank}
\DeclareMathOperator{\sym}{sym}
\DeclareMathOperator{\tr}{tr}

\DeclareMathOperator{\F}{F}
\DeclareMathOperator{\HH}{H}
\DeclareMathOperator{\T}{T}

\DeclareMathOperator{\KKT}{KKT}

\def\scrR{\mathscr{R}}

\newtheorem{theorem}{Theorem}[section]
\newtheorem{lemma}{Lemma}[section]

\theoremstyle{definition}

\newtheorem{remark}{Remark}[section]

\allowdisplaybreaks

\numberwithin{equation}{section}
\numberwithin{figure}{section}
\numberwithin{table}{section}

\def\sss{\scriptstyle}

\title{
An NPDo Approach for Principal Joint SVD-type Block Diagonalization
}

\author{Ren-Cang Li%
\thanks{Department of Mathematics, University of Texas at Arlington, Arlington, TX 76019-0408, USA.
        Supported in part by NSF DMS-2407692.
        Email: {\tt rcli@uta.edu}.}
\and
Li Wang%
\thanks{Department of Mathematics, University of Texas at Arlington, Arlington, TX 76019-0408, USA.
        Supported in part by NSF DMS-2407692.
        Email: {\tt li.wang@uta.edu}.}
\and
Mei Yang%
\thanks{Department of Mathematics, University of Texas at Arlington, Arlington, TX 76019-0408, USA.
        Email: {\tt mei.yang@uta.edu}.}
}

\date{
May 9, 2026
}

\begin{document}

\maketitle


\begin{abstract}
This paper is concerned with partial {\em Joint SVD-type Block Diagonalization\/} of several matrices
so that the extracted diagonal parts collectively optimally assume part of the total mass of all given matrices.
For that reason, it will be referred also as {\em Principal Joint SVD-type Block Diagonalization}.
When each block-size is 1-by-1, it is about finding a dominant partial joint SVD decomposition for the matrices of interests.
An NPDo approach is proposed for maximizing the common dominant block-diagonal parts collectively.
It is shown that the NPDo approach combined with Gauss-Seidel-type updating is globally convergent to a stationary point
while the objective increases monotonically.
Numerical experiments are presented to illustrate the efficiency of the NPDo approach.

\bigskip
\noindent
{\bf Keywords:}
principal joint block-diagonalization,
principal joint SVD-type block-diagonalization,
alternating SCF,
ASCF,
NPDo

\smallskip
\noindent
{\bf Mathematics Subject Classification}  62H25; 65F30; 65K05; 90C26
\end{abstract}

\clearpage
\tableofcontents

\clearpage
\section{Introduction}\label{sec:intro}
Joint SVD-type block-diagonalization (\jbsvd)
is about finding a pair of unitary matrices that can transform several given matrices to the same block-diagonal structure all at once.
For two or more matrices, this is in general cannot be done in theory because there just does not exist such a
pair, but practically one can always look for an approximate pair
that transforms the given matrices to the block-diagonal form simultaneously and optimally in certain sense.

When each block size is 1-by-1, it becomes the problem of joint SVD (\jsvd). \jsvd\ arises from
various applications such as
the estimate of two dimensional direction of arrivals, images processing, computer vision
and blind source separation \cite{copp:2010,guwe:2007,hori:2009,hori:2010,metm:2024,micc:2018,pppp:2001,sato:2015,yawl:2018}.

Specifically, let $B_{\ell}\in\bbC^{n_1\times n_2}$
for $1\le \ell\le N$ be  $N$ given matrices.
\jbsvd\ is about seeking a pair $(U,V)\in\bbC^{n_1\times n_1}\times\bbC^{n_2\times n_2}$ of unitary matrices  such that
$U^{\HH}B_{\ell}V$ for $1\le \ell\le N$ are block-diagonal with the same given block-diagonal structure.
However, in this paper, we aim at a more general setting: partial/principal joint SVD-type block-diagonalization (\pjbsvd), by which we means two things:
1) $U\in\bbC^{n_1\times k}$ and $V\in\bbC^{n_2\times k}$ have orthonormal columns, where $1\le k\le \min\{n_1,n_2\}$ but potentially $k<\min\{n_1,n_2\}$,
such that each
$U^{\HH}B_{\ell}V\in\bbC^{k\times k}$ is approximately in the prescribed block-diagonal shape, and
2) collectively the block-diagonal parts from
all $U^{\HH}B_{\ell}V\in\bbC^{k\times k}$ should pick up most of the ``weight''
in all $B_{\ell}$ as much as possible.

To explain the specifics, following \cite{cali:2019}, let
integer $k$ satisfy $1\le k\le \min\{n_1,n_2\}$, and let $\tau_k$ be a {\em partition\/} of $k$:
\begin{equation}\label{eq:tau-n}
\tau_k=(k_1,\dots,k_t),\quad
\mbox{integer $k_i\ge 1$ for $1\le i\le t$, and
     $k:=\sum\limits_{i=1}^t k_i\le \min\{n_1,n_2\}$}.
\end{equation}
For the partition $\tau_k$ and given $A\in\mathbb{R}^{k\times k}$ partitioned accordingly as
\begin{equation}\label{eq:tau-n-part}
A=\kbordermatrix{ &\sss k_1 & \sss k_2 & \sss \cdots &\sss k_t \\
         \sss k_1 & A_{11} & A_{12} & \cdots & A_{1t} \\
         \sss k_2 & A_{21} & A_{22} & \cdots & A_{2t} \\
         \sss \vdots & \vdots & \vdots &  & \vdots \\
         \sss k_t & A_{t1} & A_{t2} & \cdots & A_{tt} },
\end{equation}
we define the {\em $\tau_k$-block diagonal part\/} of $A$ as
\begin{equation*}
    \BDiag_{\tau_k}(A)=\diag(A_{11},\dots,A_{tt}).
\end{equation*}
The matrix $A$ is referred to as {\em a $\tau_k$-block diagonal matrix\/} if $A=\BDiag_{\tau_k}(A)$.
Sometimes we may simply speak of ``block diagonal'' without explicitly attaching a partition $\tau_k$ to
it, but implicitly there is one, for convenience.
Another important notation is the complex Stiefel manifold:
\begin{equation}\label{eq:cmpx-STM}
\STM{k}{n}=\{P\in\bbC^{n\times k}\,:\,P^{\HH}P=I_k\}\subset\bbC^{n\times k},
\end{equation}
the set of all $n\times k$ complex orthonormal matrices where $k\le n$.

Our goal in this paper is to seek a principal joint SVD-type block diagonalization in the sense of finding
a pair $(U,V)\in\STM{k}{n_1}\times\STM{k}{n_2}$ of orthonormal matrices  such that
$U^{\HH}B_{\ell}V$ for $1\le \ell\le N$ are optimally close to being $\tau_k$-block diagonal.
To that end, mathematically we will investigate
\begin{equation}\label{eq:pbj-intro}
\max_{(U,V)\in\STM{k}{n_1}\times\STM{k}{n_2}} \Big\{f(U,V):=\sum_{\ell=1}^N\|\BDiag_{\tau_k}(U^{\HH}B_{\ell}V)\|_F^2\Big\},
\end{equation}
and numerically
we will develop an alternating NPDo approach for its numerical solution. The term
NPDo (nonlinear polar decomposition with orthonormal polar factor dependency) was first coined in \cite{li:2024}.
For problem \eqref{eq:pbj-intro} whose objective $f(U,V)$ involves the Euclidian product of two  Stiefel manifolds, the KKT condition,  at optimality, leads to
two coupled polar decompositions of two matrix-valued functions that are dependent on their orthonormal polar factors, as we will show later.

That potentially $k<\min\{n_1,n_2\}$ makes it possible to attack large scale problems, i.e., for large
$n_1$ and/or $n_2$. This is particularly important in today's big data era where data of high dimensions is
increasingly commonplace, and often in high dimensional cases, only the dominant ``components'' are practically important and
therefore to be sought.
Hence it is critical to design numerical methods that can handle large matrices.
Our NPDo approach falls into such a category of methods. It is simple to implement and thus can be easily embedded into
large/complex applications.
We emphasize two special cases:
\begin{itemize}
  \item All $k_i=1$ and hence $t=k$: it will result in the most dominate  partial joint SVD,
        which we will refer to as principal joint SVD (\pjsvd);
  \item $t=1$ and hence $k_1=k$: it will result in the most dominate joint compression.
\end{itemize}
There were studies for the first case: all $k_i=1$ and either partial or full \jsvd.
Sato~\cite{sato:2015} investigated partial \jsvd\ and proposed to compute it
by a Riemannian trust-region method.
Congedo, Phlypo, and  Pham~\cite{copp:2010} were also interested in partial \jsvd\ of square matrices and proposed an
alternating algorithm,  drawing inspiration from the power iteration heuristically from numerical linear algebra
but without convergence analysis (see \Cref{rk:NPDo:SVD} for an explanation).

Optimization problem \eqref{eq:pbj-intro}
can in principle be solved by any general purposed optimization technique  that has been adapted from
general purposed optimization techniques to matrix manifolds over the past three decades
(see \cite{abms:2008,manopt,edas:1999,galc:2018,wagl:2021,weyi:2013} and references therein).
However, as well demonstrated recently in \cite{wazl:2022a,wazl:2023,zhys:2020,zhli:2014a,zhli:2014b,zhwb:2022}
optimization on Stiefel manifolds from various data science and engineering applications has strong
numerical linear algebra characteristics
which, if properly explored, can lead to more efficient customized algorithms than
general purposed optimization techniques.
This is what motivates us and what we will do in this paper to problem \eqref{eq:pbj-intro}.

Another inconvenience of existing general purposed optimization algorithms/packages for
optimization on matrix manifolds is that they are often designed for real matrix manifolds,
and an optimization problem
such as \eqref{eq:pbj-intro} will have to be first transformed, equivalently and properly,
into one on {\em real\/} matrix manifolds before an existing solver can be called upon to help.
Our NPDo approach to be detailed in this paper, as well as our earlier works on related topic \cite{wazl:2022a,wazl:2023,zhwb:2022},
do not have such an inconvenience.
Throughout this paper,
our presentation on \eqref{eq:pbj-intro}
is in terms of complex Stiefel manifolds \eqref{eq:cmpx-STM}, and in the case when all $B_{\ell}$ are real, one can simply limit
$U$ and $V$ to their respective real Stiefel manifolds without any change.

%
%
%

The rest of this paper is organized as follows. \Cref{sec:NPDo} reviews the NPDo approach for maximizing sum of coupled traces
\cite{li:2024,wazl:2022a}. In \cref{sec:NPDo:SVD}, we first get to the specifics of the optimization model
that determines \pjbsvd\ and then establish our alternating NPDo approach
and its LOCG acceleration for solving
the model. Detailed convergence analysis is conducted in \cref{sec:cvg-jbSVD}.
Numerical results by our NPDo approach on randomly generated testing sets with various degree of being
jointly SVD-type block-diagonalizable are reported in \cref{sec:egs-jbSVD}. Finally, we draw our conclusions
in \cref{sec:concl}.

{\bf Notation.}
We follow the following notation convention throughout this paper.
  $\bbR^{m\times n}$  is the set of $m\times n$ real matrices,  $\bbR^n=\bbR^{n\times 1}$, and $\bbR=\bbR^1$,
        and similarly $\bbC^{m\times n}$,  $\bbC^n$, and $\bbC$ except for the complex numbers.
$I_n\in\bbR^{n\times n}$ is the identity matrix or simply $I$ if its size is clear from the context.
For a matrix/vector $X$,
  $X^{\T}$ and $X^{\HH}$ stand for its transpose and complex conjugate transpose, respectively.
For $X\in\bbC^{m\times n}$, $\|X\|_2$, $\|X\|_{\F}$, and $\|X\|_{\tr}$
are its spectral norm, Frobenius norm, and trace norm (also known as the nuclear norm), respectively, and
$\sigma_{\min}(X)$ is its smallest singular
value\footnote {It is understood that $X$ has $\min\{m,n\}$
     singular values.}.
$\cR(X)$ is the column space of $X$, spanned by its columns. If also $X\in\bbC^{m\times n}$ is square, i.e., $m=n$, $\tr(X)$ is its trace and $\sym(X)=(X+X^{\HH})/2$.
A matrix $A\succ 0\, (\succeq 0)$ means that it is Hermitian and positive definite (semi-definite), and
accordingly
$A\prec 0\, (\preceq 0)$ if $-A\succ 0\, (\succeq 0)$.

\section{
         Sum of Coupled Traces}\label{sec:NPDo}
Our working engine for efficiently solving \eqref{eq:pbj-intro} is the NPDo approach for maximizing the sum of coupled traces
that was originally investigated in \cite{bomt:1998,wazl:2022a} and has since been much extended (see \cite[Example 5.1]{li:2024}).
We consider
\begin{subequations}\label{eq:OptOnSTM-master0}
\begin{equation}\label{eq:OptOnSTM-master0a}
\max_{P\in\STM{k}{n}} \Big\{ f(P):=\sum_{i=1}^M\tr\big(P_i^{\HH}A_iP_i\big)\Big\},
\end{equation}
where $\STM{k}{n}$ is the complex Stiefel manifold \eqref{eq:cmpx-STM}, and
\begin{equation}\label{eq:OptOnSTM-master0b}
      \framebox{
      \parbox{12.0cm}{
      $A_i\succeq 0$ for $1\le i\le M$, $P_i\in\bbC^{n\times k_i}$ for $1\le i\le M$ are
submatrices consisting of a few or all columns of $P$ (in particular, sharing
common columns of $P$ by different $P_i$ is allowed).
      }}
\end{equation}
\end{subequations}

\begin{remark}\label{rk:Pi}
We comment on a couple of distinctions regarding $P_i$ in \eqref{eq:OptOnSTM-master0b} from previous ones in
\cite{bomt:1998,wazl:2022a}: in \cite{bomt:1998} each $P_i$ is the $i$th column of $P$ and (thus also $M=k$)
and whereas in \cite{wazl:2022a} each $P_i$ consists of consecutive columns of $P$ and no two $P_i$ share a common column of $P$. Arguably,
 allowing two $P_i$ share a common column does not make it mathematically more general because
each trace $\tr\big(P_i^{\HH}A_iP_i\big)$ can be broken into the sum of entries each of which involves just one column of $P$
and therefore one can always first break up all traces $\tr\big(P_i^{\HH}A_iP_i\big)$ and then
combine the like terms (i.e., those involving the same column of $P$). Nonetheless, the present form
\eqref{eq:OptOnSTM-master0} can be application-friendlier sometimes. Also, such a break-up of each
$\tr\big(P_i^{\HH}A_iP_i\big)$ may no longer work for objective $f$ that is more broader than
in \eqref{eq:OptOnSTM-master0} such as some convex composition of $\tr\big(P_i^{\HH}A_iP_i\big)$ as in
\cite[Example 5.1]{li:2024} or more generally a convex composition of $\tr\big(\big[P_i^{\HH}A_iP_i\big]^s\big)$
for some integer $s\ge 1$
as in \cite{lilw:2026}.
\end{remark}

Previous studies in \cite{bomt:1998,li:2024,wazl:2022a} focused on the real number field and their extensions
to the complex number field for a real-valued function are more or less straightforward but some modifications
are necessary. In particular, we need to define properly the Euclidean gradient of the real-valued function.
For that, we will follow the brief discussion in \cite[section~2.1]{lilw:2026}:
the Euclidean gradient of the real-valued function $f$ at $P\in \bbC^{n\times k}$ is the unique matrix $\nabla f(P) \in \bbC^{n\times k}$ such that for $E\in \bbC^{n\times k}$ with sufficiently small $\|E\|_2$
\begin{equation}\label{eq:f(P+E)}
f(P+E)=f(P)+\langle \nabla f(P),E  \rangle
          +o(\|E\|_2),
\end{equation}
where $\langle X,Y  \rangle =   \Re(\tr(Y^{\HH}X))$ for $X,\, Y\in \bbC^{n\times k}$ is
the standard inner product on $\bbC^{n\times k}$, and $\Re(\cdot)$ takes the real part of a complex number.

It can be seen that each $P_i$ can be expressed as
\begin{equation}\label{eq:Pi=PJi}
P_i=PJ_i\quad\mbox{for $1\le i\le M$},
\end{equation}
where
each $J_i\in\bbR^{k\times k_i}$ is a submatrix of the identity matrix $I_k$, consisting of those columns of $I_k$
with the same column indices as $P_i$ to $P$.
 We have
\begin{equation}\label{eq:scrH-NPDo}
\scrH(P):=\nabla f(P) 
    =\sum_{i=1}^M\big[\nabla \tr\big(P_i^{\HH}A_iP_i\big)\big]J_i^{\T} 
    =\sum_{i=1}^M2A_iP_iJ_i^{\T}\in\bbC^{n\times k}.
\end{equation}
With \eqref{eq:scrH-NPDo}, the KKT condition for \eqref{eq:OptOnSTM-master0} can be stated as \cite[section~2]{li:2024}
\begin{equation}\label{eq:KKT-master0}
\scrH(P)=P\Lambda
\quad \mbox{with} \quad
\Lambda^{\HH}=\Lambda\in\bbC^{k\times k},\quad P\in\STM{k}{n}.
\end{equation}
A critical property for the objective function $f(P)$ of \eqref{eq:OptOnSTM-master0} is
\begin{equation}\label{eq:NPDo-satisfied}
\framebox{
\parbox{13cm}{\em
{\rm \cite{wazl:2022a}}
Given $P,\,\what P\in\bbC^{n\times k}$,
if
$$
\Re(\tr(\what P^{\HH}\scrH(P)))\ge\tr(P^{\HH}\scrH(P))+\eta
$$
for some $\eta\in\bbR$,
then $f(\what P)\ge f(P)+\eta$.
}}
\end{equation}
In the NPDo theory \cite{li:2024}, this confirms that  the objective function $f(P)$
satisfies {\bf the NPDo Ansatz} postulated there. Two important implications of
\eqref{eq:NPDo-satisfied} are: 1) at any maximizer $P$, \eqref{eq:KKT-master0} is satisfied
and at the same time $\Lambda\succeq 0$, i.e., it is the polar decomposition of $\scrH(\cdot)$
evaluated at $P$, and 2) the following SCF (self-consistent-field) iteration
\begin{equation}\label{eq:SCF-form:NPDo:intro}
      \framebox{
      \parbox{12.0cm}{
      SCF for NPDo \eqref{eq:KKT-master0}: given $P^{(0)}\in\STM{k}{n}$, iteratively
      compute polar decomposition
      $\scrH(P^{(j-1)})=P^{(j)}\Lambda_j$ of $\scrH(P^{(j-1)})$ for $P^{(j)}\in\STM{k}{n}$.
      }}
\end{equation}
is convergent \cite[Theorems~3.2 and 3.3]{li:2024}.
The first implication leads to the name {\em nonlinear polar decomposition with orthogonal factor dependency\/} (NPDo).
In general $k\ll n$ when $n$ is large, and the polar decomposition in SCF \eqref{eq:SCF-form:NPDo:intro}
should be computed by the thin SVD: $\scrH(P^{(j-1)})=Q_j\Sigma_jS_j^{\HH}$ where $Q_j\in\STM{k}{n}$ and
$S_j\in\STM{k}{k}$, and then $P^{(j)}=Q_jS_j^{\HH}$.

\section{Principal Joint SVD-type Block Diagonalization}\label{sec:NPDo:SVD}
Given  $\tau_k$ as in \eqref{eq:tau-n} and $N$ square matrices $B_{\ell}\in\bbC^{n_1\times n_2}$ for $1\le \ell\le N$, by
{\em principal joint SVD-type block diagonalization\/} (\pjbsvd), we mean to seek two orthonormal matrices $U\in\STM{k}{n_1}$ and $V\in\STM{k}{n_2}$ such
that $U^{\HH}B_{\ell}V$ are simultaneously and optimally close to $\BDiag_{\tau_k}(U^{\HH}B_{\ell}V)$ for all $1\le\ell\le N$ in the sense of \eqref{eq:pbj-intro}, i.e.,
\begin{equation}\label{eq:opt-pjbSVD}
\max_{U\in\STM{k}{n_1},\,V\in\STM{k}{n_2}} \left\{ f(U,V):=\sum_{\ell=1}^N\|\BDiag_{\tau_k}(U^{\HH}B_{\ell}V)\|_{\F}^2\right\},
\end{equation}
where $1\le k\le\min\{n_1,n_2\}$, and $\BDiag_{\tau_k}(\cdot)$ are defined as
in \cref{sec:intro}.
Setting all $k_i=1$ will result in the most dominate partially joint SVD, which we will call
{\em principal joint SVD-type diagonalization\/} (\pjsvd).

\subsection{The  NPDo approach -- alternating SCF}\label{ssec:NPDo-alt}
Partition $U$ and $V$ columnwise as
\begin{equation}\label{eq:P-part'n}
U=\kbordermatrix{ &\sss k_1 &\sss k_2 &\sss \cdots &\sss k_t \\
                  & U_1 & U_2 & \cdots & U_t}, \quad
V=\kbordermatrix{ &\sss k_1 &\sss k_2 &\sss \cdots &\sss k_t \\
                  & V_1 & V_2 & \cdots & V_t}
\end{equation}
or, alternatively, $U_i=UJ_i^{\T}$ and $V_i=VJ_i^{\T}$ where current $J_i$ for $1\le i\le t$ are from partitioning $I_k$ column-wise according to $\tau_k$ as
\begin{equation}\label{eq:scrH-NPDo-3}
I_k=\kbordermatrix{ &\sss k_1 &\sss k_2 &\sss \cdots &\sss k_t \\
                  & J_1 & J_2 & \cdots & J_t}.
\end{equation}
The objective $f(U,V)$ of \eqref{eq:opt-pjbSVD} can then be expressed as
\begin{subequations}\label{eq:pjbSVD:obj-tr}
\begin{align}
f(U,V)=\sum_{\ell=1}^N\sum_{i=1}^t\|U_i^{\HH}B_{\ell}V_i\|_{\F}^2
    &=\sum_{\ell=1}^N\sum_{i=1}^t\tr\big(U_i^{\HH}[B_{\ell}V_iV_i^{\HH}B_{\ell}^{\HH}]U_i\big) \label{eq:pjbSVD:obj-tr-1} \\
    &=\sum_{\ell=1}^N\sum_{i=1}^t\tr\big(V_i^{\HH}[B_{\ell}^{\HH}U_iU_i^{\HH}B_{\ell}]V_i\big). \label{eq:pjbSVD:obj-tr-2}
\end{align}
\end{subequations}
In \eqref{eq:pjbSVD:obj-tr}, there are two different reformulations of the objective $f(U,V)$
in terms of matrix traces and both take the form as
the sum of coupled traces as in \eqref{eq:OptOnSTM-master0}. They are for the purpose of
optimizing $f(U,V)$ alternatingly over $U$ and $V$, respectively.

To derive the KKT condition of \eqref{eq:opt-pjbSVD},
we note that
\begin{subequations}\label{eq:opt-pjbSVD-partdiff}
\begin{align}
\scrH_1(U,V):= 
           \nabla_Uf(U,V)
    &=2\sum_{\ell=1}^N\Big[\big(B_{\ell}V_1V_1^{\HH}B_{\ell}^{\HH}\big)U_1,\ldots,\big(B_{\ell}V_tV_t^{\HH}B_{\ell}^{\HH}\big)U_t\Big]
          \nonumber \\
    &=2\sum_{\ell=1}^N\Big[B_{\ell}V_1\big(V_1^{\HH}B_{\ell}^{\HH}U_1\big),\ldots,B_{\ell}V_t\big(V_t^{\HH}B_{\ell}^{\HH}U_t\big)\Big],
         \label{eq:opt-pjbSVD-partdiff-1} \\
\scrH_2(U,V):= 
           \nabla_Vf(U,V)
    &=2\sum_{\ell=1}^N\Big[\big(B_{\ell}^{\HH}U_1U_1^{\HH}B_{\ell}\big)V_1,\ldots,\big(B_{\ell}^{\HH}U_tU_t^{\HH}B_{\ell}\big)V_t\Big]
          \nonumber \\
    &=2\sum_{\ell=1}^N\Big[B_{\ell}^{\HH}U_1\big(U_1^{\HH}B_{\ell}V_1\big),\ldots,B_{\ell}^{\HH}U_t\big(U_t^{\HH}B_{\ell}V_t\big)\Big],
         \label{eq:opt-pjbSVD-partdiff-2}
\end{align}
\end{subequations}
yielding the KKT condition of \eqref{eq:opt-pjbSVD} as follows:
\begin{subequations}\label{eq:pjbSVD:KKT}
\begin{align}
\scrH_1(U,V)&=U\Lambda_1, \quad U\in\STM{k}{n_1}, \quad \Lambda_1=\Lambda_1^{\HH}\in\bbC^{k\times k},
         \label{eq:pjbSVD:KKT-1} \\
\scrH_2(U,V)&=V\Lambda_2, \quad V\in\STM{k}{n_2}, \quad \Lambda_2=\Lambda_2^{\HH}\in\bbC^{k\times k}.
         \label{eq:pjbSVD:KKT-2}
\end{align}
\end{subequations}
We emphasize that the reformulations in \eqref{eq:opt-pjbSVD-partdiff-1} and \eqref{eq:opt-pjbSVD-partdiff-2} of their respective expressions
at the lines before them have numerical implication, i.e., saving works because each $\big(V_i^{\HH}B_{\ell}^{\HH}U_i\big)$ is $k_i$-by-$k_i$ whereas
$\big(B_{\ell}V_iV_i^{\HH}B_{\ell}^{\HH}\big)$ is $n_i$-by-$n_i$ and
hence each block-column $\big(B_{\ell}V_iV_i^{\HH}B_{\ell}^{\HH}\big)U_i$
should be evaluated
as $B_{\ell}V_i\big(V_i^{\HH}B_{\ell}^{\HH}U_i\big)$.
It can be verified that
\begin{equation}\label{eq:recover-f}
\tr\big(U^{\HH}\scrH_1(U,V)\big)=\tr\big(V^{\HH}\scrH_2(U,V)\big)= f(U,V).
\end{equation}
As a result of \eqref{eq:NPDo-satisfied} \cite{wazl:2022a}, we have

\begin{theorem}\label{thm:f(UV)-Ansatz}
Denote by $\scrH_i$ the partial Euclidean gradients of $f(U,V)$ with respect to $U$ and $V$, respectively, as in \eqref{eq:opt-pjbSVD-partdiff},
and let $(U,V)\in\bbC^{n_1\times k}\times\bbC^{n_2\times k}$. The following statements hold.
\begin{enumerate}[{\rm (a)}]
  \item For any $\what U\in\bbC^{n_1\times k}$, if
\begin{equation}\label{eq:f(U,V)-Ansatz-1}
\Re(\tr(\what U^{\HH}\scrH_1(U,V)))\ge\tr(U^{\HH}\scrH_1(U,V))+\eta_1\quad\mbox{for some $\eta_1\in\bbR$},
\end{equation}
then $f(\what U,V)\ge f(U,V)+\eta_1$.
  \item For any $\what V\in\bbC^{n_2\times k}$, if
\begin{equation}\label{eq:f(U,V)-Ansatz-2}
\Re(\tr(\what V^{\HH}\scrH_2(U,V)))\ge\tr(V^{\HH}\scrH_2(U,V))+\eta_2\quad\mbox{for some $\eta_2\in\bbR$},
\end{equation}
then $f(U,\what V)\ge f(U,V)+\eta_2$.
\end{enumerate}
\end{theorem}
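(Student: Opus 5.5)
The plan is to reduce each part to the sum-of-coupled-traces setting of \cref{sec:NPDo} and invoke property \eqref{eq:NPDo-satisfied}. For part (a), fix $V$ and use the reformulation \eqref{eq:pjbSVD:obj-tr-1}. Regarded as a function of $U$ alone, $f(\cdot,V)$ has exactly the form \eqref{eq:OptOnSTM-master0a} with $M=Nt$ terms. These terms are indexed by $(\ell,i)$, with matrices $A_{\ell i}:=B_{\ell}V_iV_i^{\HH}B_{\ell}^{\HH}\succeq 0$ and column blocks $P_{\ell i}:=U_i=UJ_i$, where $J_i$ comes from \eqref{eq:scrH-NPDo-3}. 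The blocks for different $\ell$ share the same columns of $U$, which \eqref{eq:OptOnSTM-master0b} explicitly allows. Then \eqref{eq:scrH-NPDo} gives $\nabla_U f(U,V)=\sum_{\ell,i}2A_{\ell i}U_iJ_i^{\T}$, which matches the first line of \eqref{eq:opt-pjbSVD-partdiff-1}, i.e.\ $\scrH_1(U,V)$. Hypothesis \eqref{eq:f(U,V)-Ansatz-1} is then precisely the hypothesis of \eqref{eq:NPDo-satisfied} with $P=U$ and $\what P=\what U$. The conclusion $f(\what U,V)\ge f(U,V)+\eta_1$ follows. Part (b) is identical with the roles swapped: use \eqref{eq:pjbSVD:obj-tr-2} with $A_{\ell i}:=B_{\ell}^{\HH}U_iU_i^{\HH}B_{\ell}\succeq 0$ and $P_{\ell i}=V_i$.

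To keep the argument self-contained rather than relying only on the citation, I would also verify \eqref{eq:NPDo-satisfied} directly for this $f$; the check is a one-line convexity argument. For $A\succeq 0$, set $g(X)=\tr(X^{\HH}AX)$. Then
\[
g(\what X)-g(X)-2\Re\tr\big(\what X^{\HH}AX\big)+2\tr(X^{\HH}AX)=\tr\big((\what X-X)^{\HH}A(\what X-X)\big)\ge 0 .
\]
Equivalently, $g(\what X)\ge g(X)+\langle \nabla g(X),\what X-X\rangle$. Summing this over $(\ell,i)$ with $X=U_i$ and $\what X=\what U_i$ yields
\[
f(\what U,V)-f(U,V)\ge \Re\tr\big(\what U^{\HH}\scrH_1(U,V)\big)-\tr\big(U^{\HH}\scrH_1(U,V)\big).
\]
Here I use $\sum_{\ell,i}2\tr(\what U_i^{\HH}A_{\ell i}U_i)=\tr\big(\what U^{\HH}\scrH_1(U,V)\big)$, which follows from $\what U_i=\what UJ_i$ and $J_i^{\T}J_i=I$ with disjoint column blocks. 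The same identity with $\what U=U$ gives the real quantity $\tr(U^{\HH}\scrH_1)=f(U,V)$, consistent with \eqref{eq:recover-f}. Combining the inequality with \eqref{eq:f(U,V)-Ansatz-1} gives (a).

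There is no real obstacle. The only point needing care is bookkeeping: confirming that the gradient assembled from \eqref{eq:scrH-NPDo} coincides with $\scrH_1$ (respectively $\scrH_2$), that the real part is handled correctly in the complex setting, and that the convexity inequality uses no orthonormality. The theorem is in fact stated for arbitrary $\what U\in\bbC^{n_1\times k}$, not just Stiefel points, and the argument above is consistent with that.
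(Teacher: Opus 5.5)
Your proposal is correct and matches the paper's argument: the paper derives the theorem directly from property \eqref{eq:NPDo-satisfied} applied to the coupled-trace forms \eqref{eq:pjbSVD:obj-tr-1} and \eqref{eq:pjbSVD:obj-tr-2}, as in your first paragraph, and your convexity identity gives a valid self-contained proof of that cited property. One inessential slip: setting $\what U=U$ in your trace identity gives $\tr(U^{\HH}\scrH_1(U,V))=2f(U,V)$, not $f(U,V)$ (the paper's \eqref{eq:recover-f} has the same factor-of-two slip, since $f(\cdot,V)$ is quadratic in $U$), but your argument uses only that this quantity is real, so nothing is affected.
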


\Cref{thm:f(UV)-Ansatz} is broader than what we will need in two aspects: 1) the theorem does not require that $U,\what U, V, \what V$ have orthonormal columns, and 2) the theorem does not require $\eta_i\ge 0$.
However, if $U\in\STM{k}{n_1}$, then $\eta_1$ achieves its maximum, among $\what U\in\STM{k}{n_1}$,
at $\what U$ being the orthonormal polar factor of $\scrH_1(U,V)$ because
$\Re(\tr(\what U^{\HH}\scrH_1(U,V)))$ attains its maximum at such $\what U$ \cite{li:2026}, and the similar thing can be said about $\eta_2$. This fact forms the foundation of
\Cref{alg:NPDo-pjbSVD}.

Along the line of the proof of \cite[Theorem~3.1]{li:2024}, we can also get

\begin{theorem}\label{thm:maximizer-pjSVD}
Let $(U_*,V_*)\in\STM{k}{n_1}\times\STM{k}{n_2}$ be a maximizer pair of \eqref{eq:opt-pjbSVD}.
Then the KKT condition
\eqref{eq:pjbSVD:KKT} holds with
$(U,V)=(U_*,V_*)$, i.e.,
\begin{subequations}\label{eq:pjbSVD:KKTatMAX}
\begin{align}
\scrH_1(U_*,V_*)&=U_*\Lambda_{1*}, \quad \Lambda_{1*}=U_*^{\HH}\scrH_1(U_*,V_*)\succeq 0,
         \label{eq:pjbSVD:KKTatMAX-1} \\
\scrH_2(U_*,V_*)&=V_*\Lambda_{2*}, \quad \Lambda_{2*}=V_*^{\HH}\scrH_2(U_*,V_*)\succeq 0.
         \label{eq:pjbSVD:KKTatMAX-2}
\end{align}
\end{subequations}
\end{theorem}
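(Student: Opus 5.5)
The plan is to fix one variable and view the problem in the other as an instance of \eqref{eq:OptOnSTM-master0}; this yields both the KKT equations and the positive semidefiniteness of the multipliers. With $V=V_*$ fixed, $U_*$ maximizes $U\mapsto f(U,V_*)$ over $\STM{k}{n_1}$. By \eqref{eq:pjbSVD:obj-tr-1}, this function is a sum of coupled traces with $A_i=\sum_{\ell}B_\ell V_{*i}V_{*i}^{\HH}B_\ell^{\HH}\succeq 0$. Its Euclidean gradient is exactly $\scrH_1(U,V_*)$. The symmetric argument with $U=U_*$ fixed, using \eqref{eq:pjbSVD:obj-tr-2}, handles $V_*$. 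So it suffices to prove part~(a); part~(b) is identical with roles swapped.

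\emph{Step 1 (KKT form).} I will use the first-order optimality condition on the Stiefel manifold, as recalled in \eqref{eq:KKT-master0} from \cite[section~2]{li:2024}. It gives $\scrH_1(U_*,V_*)=U_*\Lambda_{1*}$ with $\Lambda_{1*}$ Hermitian. Multiplying on the left by $U_*^{\HH}$ and using $U_*^{\HH}U_*=I_k$ gives $\Lambda_{1*}=U_*^{\HH}\scrH_1(U_*,V_*)$. To be self-contained, I would derive this by taking any tangent curve $U(s)$ with $U(0)=U_*$ and $U'(0)=Z$, where $U_*^{\HH}Z$ is skew-Hermitian, and requiring $\langle\scrH_1,Z\rangle=0$. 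Decomposing $Z=U_*\Omega+U_\perp K$ with $\Omega$ skew-Hermitian and $K$ arbitrary forces $U_\perp^{\HH}\scrH_1=0$ and $U_*^{\HH}\scrH_1$ Hermitian.

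\emph{Step 2 (semidefiniteness).} This is the main point, and I will get it from \Cref{thm:f(UV)-Ansatz}(a). Write $H:=\scrH_1(U_*,V_*)=U_*\Lambda_{1*}$ and let $\what U$ be an orthonormal polar factor of $H$. Then $\Re\tr(\what U^{\HH}H)=\|H\|_{\tr}=\|\Lambda_{1*}\|_{\tr}$, and by \eqref{eq:recover-f}, $\tr(U_*^{\HH}H)=\tr(\Lambda_{1*})$. Set $\eta_1=\|\Lambda_{1*}\|_{\tr}-\tr(\Lambda_{1*})\ge 0$. \Cref{thm:f(UV)-Ansatz}(a) then gives $f(\what U,V_*)\ge f(U_*,V_*)+\eta_1$. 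Since $\what U\in\STM{k}{n_1}$ and $(U_*,V_*)$ is a maximizer, this forces $\eta_1=0$. Equality $\|\Lambda_{1*}\|_{\tr}=\tr(\Lambda_{1*})$ holds for a Hermitian matrix exactly when all its eigenvalues satisfy $|\lambda|=\lambda$. Hence $\Lambda_{1*}\succeq 0$.

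The only delicate point is Step~1's tangent-space computation in the complex setting with the real inner product $\Re\tr(\cdot)$. The key detail there is that stationarity against all skew-Hermitian $\Omega$ yields Hermitian-ness of $U_*^{\HH}\scrH_1$, not mere symmetry. This is routine; Step~2 then follows cleanly from \Cref{thm:f(UV)-Ansatz}.
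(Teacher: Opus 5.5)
Your proposal is correct and is essentially the argument the paper defers to (the proof of \cite[Theorem~3.1]{li:2024}). You obtain the KKT equations from first-order stationarity of $f(\cdot,V_*)$ on $\STM{k}{n_1}$, then apply \Cref{thm:f(UV)-Ansatz}(a) with $\what U$ the orthonormal polar factor of $\scrH_1(U_*,V_*)$ to force $\|\Lambda_{1*}\|_{\tr}=\tr(\Lambda_{1*})$, hence $\Lambda_{1*}\succeq 0$, and the $V_*$ case is symmetric. As a side note, Step~1 is dispensable: running Step~2 with $\eta_1:=\|H\|_{\tr}-\tr(U_*^{\HH}H)$ (real by \eqref{eq:recover-f}) gives $\eta_1=0$, and by \cite[Lemma~4.2]{li:2026} this alone yields $H=U_*\Lambda_{1*}$ with $\Lambda_{1*}\succeq 0$.
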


\begin{algorithm}[t]
\caption{NPDoJBSVD: the NPDo approach for solving \eqref{eq:opt-pjbSVD}.}
\label{alg:NPDo-pjbSVD}
\begin{algorithmic}[1]
\REQUIRE $B_{\ell}\in\bbC^{n_1\times n_2}$ for $1\le \ell\le N$
         (and, accordingly, matrix-valued functions $\scrH_i(\cdot,\cdot)$ in \eqref{eq:opt-pjbSVD-partdiff}),
         and initial $(U^{(0)},V^{(0)})\in\STM{k}{n_1}\times \STM{k}{n_2}$;
\ENSURE  an approximate maximizer pair of \eqref{eq:opt-pjbSVD}.
\FOR{$j=0,1,\ldots$ until convergence}
    \STATE for Jacobi-type updating only:
           \begin{itemize}
             \item compute $U^{(j+1)}$ and $V^{(j+1)}$ as the orthonormal polar factors of $\scrH_1(U^{(j)},V^{(j)})$ and $\scrH_2(U^{(j)},V^{(j)})$, respectively;
           \end{itemize}
    \STATE for Gauss-Seidel-type updating only:
           \begin{itemize}
             \item compute $U^{(j+1)}$ as the orthonormal polar factor of $\scrH_1(U^{(j)},V^{(j)})\in\bbC^{n_1\times k}$, and then
                   compute $V^{(j+1)}$ as the orthonormal polar factor of $\scrH_2(U^{(j+1)},V^{(j)})\in\bbC^{n_2\times k}$;
           \end{itemize}
\ENDFOR
\RETURN  the last $(U^{(j)},V^{(j)})$.
\end{algorithmic}
\end{algorithm}

\Cref{thm:maximizer-pjSVD} says, at any maximizer pair, each of the equations in \eqref{eq:pjbSVD:KKT} is a polar decomposition.
With \Cref{thm:f(UV)-Ansatz,thm:maximizer-pjSVD}, naturally we may solve
\eqref{eq:opt-pjbSVD} via maximizing $f(U,V)$ alternatingly over $U$ and $V$.
We outline the procedure in \Cref{alg:NPDo-pjbSVD},
which includes two updating schemes -- Jacobi-type or Gauss-Seidel-type.
The two types of updating schemes mimic the ones commonly used in linear system solving \cite{demm:1997}.
Considering they falls into the well-known self-consistent-field (SCF) iteration style, we will refer
them as {\em alternating SCF\/} (ASCF) iterations.
A couple of comments about implementing \Cref{alg:NPDo-pjbSVD} are in order.
\begin{enumerate}[(1)]
  \item \Cref{alg:NPDo-pjbSVD} has two variants in one: with Jacobi-type updating or with Gauss-Seidel-type updating.
  \item At Lines 2 and 3, the thin SVD provides an efficient way to compute the orthonormal polar factors for very small $k\ll\min\{n_1,n_2\}$.
  \item A reasonable stopping criterion at Line 1 is
        \begin{equation}\label{eq:stop-pjbSVD}
        \varepsilon_{\KKT}:=\frac {\big\|\scrH_1(U,V)-U\Lambda_1(U,V)\big\|_{\F}+\big\|\scrH_2(U,V)-V\Lambda_2(U,V)\big\|_{\F}}
                {2\sum_{\ell=1}^N\|B_{\ell}\|_{\F}\|B_{\ell}\|_2}
              \le\epsilon,
        \end{equation}
        where $\Lambda_1(U,V)=\sym\big(U^{\HH}\scrH_1(U,V)\big)$ and $\Lambda_2(U,V)=\sym\big(V^{\HH}\scrH_2(U,V)\big)$, and
        $\epsilon$ is a given tolerance.
        For programming consideration, it is more efficient to evaluate $\varepsilon_{\KKT}$
        at $(U^{(j)},V^{(j)})$, the pair of the most current approximations for Jacobi-type updating
        and, for Gauss-Seidel-type updating,  to evaluate $\scrH_1(U,V)-U\Lambda_1(U,V)$ at $(U^{(j)},V^{(j-1)})$
        while $\scrH_2(U,V)-V\Lambda_2(U,V)$ at $(U^{(j)},V^{(j)})$.


        Computing $\|B_{\ell}\|_2$ is nontrivial for large $n_1$ and $n_2$, but fortunately for  normalization purpose some rough estimate
        is good enough, e.g., replacing $\|B_{\ell}\|_2$ with $\sqrt{\|B_{\ell}\|_1\|B_{\ell}\|_{\infty}}$.
        Another possibility is to use the Golub-Kahan-Lanczos bidiagonalization \cite{bddrv:2000} and often
        running a few bidiagonalization steps  can produce a very good estimate of $\|B_{\ell}\|_2$, for the same reason as in \cite{zhli:2011}.
\end{enumerate}
Finally with \Cref{thm:f(UV)-Ansatz}, for Gauss-Seidel-type updating, the general convergence theorems, similar to \cite[Theorems~3.2~and~3.3]{li:2024},
can be obtained. It is a little bit more complicated for the Jacobi-type updating. We will spell out the detail in
\cref{sec:cvg-jbSVD}.

\begin{remark}\label{rk:NPDo:SVD}
For the case of \jsvd,  the alternating scheme in \cite{copp:2010} to maximize $f(U,V)$ looks similar
to ours (for the Guass-Seidel-type updating). In fact, one of the variations is the same as our
\Cref{alg:NPDo-pjbSVD} specialized to \jsvd.
To make it clear that each $U_i$ and $V_i$ are now vectors, we write $\bu_i$ and $\bv_i$ instead and rewrite $f(U,V)$ as
\begin{subequations}\label{eq:objv-rk}
\begin{align}
f(U,V)&=\sum_{i=1}^k\tr\Big(\bu_i^{\HH}\Big[\sum_{\ell=1}^NB_{\ell}\bv_i\bv_i^{\HH}B_{\ell}^{\HH}\Big]\bu_i\Big) \label{eq:objv-rk-1}\\
      &=\sum_{i=1}^k\tr\Big(\bv_i^{\HH}\Big[\sum_{\ell=1}^NB_{\ell}^{\HH}\bu_i\bu_i^{\HH}B_{\ell}\Big]\bv_i\Big). \label{eq:objv-rk-2}
\end{align}
\end{subequations}
Each reformulation, \eqref{eq:objv-rk-1} or \eqref{eq:objv-rk-2}, is the sum of $k$ traces, and the alternating scheme in \cite{copp:2010}
attempts to maximize $f(U,V)$ as if those $k$ traces were independent.
Basically, each iterative step of the alternating procedure in
\cite[Algorithm~1]{copp:2010}, computing the next approximations $\{\tilde\bu_i\}_{i=1}^k$ and $\{\tilde\bv_i\}_{i=1}^k$ given current approximations $\{\bu_i\}_{i=1}^k$ and $\{\bv_i\}_{i=1}^k$,
is as follows:
\begin{enumerate}[1.]
  \item compute
      \begin{equation}\label{eq:power-u}
      \hat\bu_i=\Big[\sum_{\ell=1}^NB_{\ell}\bv_i\bv_i^{\HH}B_{\ell}^{\HH}\Big]\bu_i
      \quad\mbox{for $1\le i\le k$},
      \end{equation}
      which can be viewed as one step of the usual power iteration \cite{demm:1997}, and then
      orthogonalize $\{\hat\bu_i\}_{i=1}^k$ to yield $\{\tilde\bu_i\}_{i=1}^k$
      as the new approximations;
  \item compute 
      \begin{equation}\label{eq:power-v}
      \hat\bv_i=\Big[\sum_{\ell=1}^NB_{\ell}^{\HH}\tilde\bu_i\tilde\bu_i^{\HH}B_{\ell}\Big]\bv_i
      \quad\mbox{for $1\le i\le k$},
      \end{equation}
      which again can be viewed as one step of the usual power iteration, and then orthogonalize $\{\hat\bv_i\}_{i=1}^k$ to yield $\{\tilde\bv_i\}_{i=1}^k$
      as
      the new approximations.
\end{enumerate}
Heuristically, theose $k$ one-step power iterations in \eqref{eq:power-u} or \eqref{eq:power-v}
attempt to improve each $\bu$-vector or $\bv$-vector and they are done
as if the summands in either \eqref{eq:objv-rk-1} or \eqref{eq:objv-rk-2} {\em independently}.
As a result, $\hat\bu_i$ for $1\le i\le n$
are unlikely orthogonal to each other and so are $\hat\bv_i$ for $1\le i\le n$.
In both step 1 and step 2 there is an orthogonalization actions.
The authors in \cite{copp:2010} attempted to fix the non-orthogonality issue by performing re-orthogonalization.
They experimented with the Gram-Schmit orthogonalization as well as the L{\"o}wdin orthogonalization that
is not very well known even today in the numerical linear algebra community!
The Gram-Schmit orthogonalization is flawed because
the process degrades any benefit from the one-step power iterations, i.e., the objective value
at the new approximations may potentially be less than at the old ones.
The L{\"o}wdin orthogonalization, discovered (in a non-SVD form), is named after a Swedish chemist, Per-Olov L{\"o}wdin, for the purpose of orthogonalizing electon orbitals. In the current context, the L{\"o}wdin orthogonalization is simply to compute
the orthonormal polar factor of $[\hat\bu_1,\ldots,\hat\bu_k]$ or that of
$[\hat\bv_1,\ldots,\hat\bv_k]$. It turns out that is the same as the orthonormal polar factor of
$\scrH_1(U,V)$ or $\scrH_2(\what U,V)$, which we do at Line 3 of \Cref{alg:NPDo-pjbSVD}.
The authors reported in \cite{copp:2010} that ``L{\"o}wdin's method has proved more robust.''
Given what we now know from  \Cref{thm:f(UV)-Ansatz}, that conclusion of theirs is hardly any surprising at all.
But it is not clear from \cite{copp:2010} if the authors were aware of,   at that time, some form of \Cref{thm:f(UV)-Ansatz}.
%
%
%
%
\end{remark}

\subsection{Acceleration with LOCG}\label{ssec:accNPDo}
We can create a LOCG-accelerated version of \Cref{alg:NPDo-pjbSVD}.
Without loss of generality, let $(U^{(-1)},V^{(-1)})$ be the pair of approximate maximizers of \eqref{eq:opt-pjbSVD}
from the very previous iterative step, and $(U^{(0)},V^{(0)})$ the  pair of current approximate maximizers.
We are now looking for the next approximate maximizer
$(U^{(1)},V^{(1)})$, along the line of LOCG (locally optimal conjugate gradient), according to
\begin{subequations}\label{eq:LOCG}
\begin{align}
&(U^{(1)},V^{(1)})=\arg\max_{X\in\STM{k}{n_1},Y\in\STM{k}{n_2}}f(X,Y) \label{eq:LOCG-1}\\
&\mbox{s.t.}\,\, \cR(X)\subseteq\cR([U^{(0)},\scrR_1(U^{(0)},V^{(0)}),U^{(-1)}]), \label{eq:LOCG-2}\\
&\hphantom{\mbox{s.t.}\,\,}\cR(Y)\subseteq\cR([V^{(0)},\scrR_2(U^{(0)},V^{(0)}),V^{(-1)}]),
             \label{eq:LOCG-3}
\end{align}
\end{subequations}
where $\cR(\cdot)$ denotes the range of a matrix, , i.e., the subspace spanned by the columns of the matrix, and
\begin{subequations}\label{eq:Rs(U,V)}
\begin{align}
\scrR_1(U,V)&=\scrH_1(U,V)-U\sym\big(U^{\HH}\scrH_1(U,V)\big), \label{eq:Rs(U,V)-1}\\
\scrR_2(U,V)&=\scrH_2(U,V)-V\sym\big(V^{\HH}\scrH_2(U,V)\big). \label{eq:Rs(U,V)-2}
\end{align}
\end{subequations}
Initially for the first iteration, we don't have $(U^{(-1)},V^{(-1)})$ and
it is understood that $U^{(-1)}$ and $V^{(-1)}$ are absent from \eqref{eq:LOCG-2} and \eqref{eq:LOCG-3}, i.e.,
simply
$\cR(X)\subseteq\cR([U^{(0)},\scrR_1(U^{(0)},V^{(0)})])$ and $\cR(Y)\subseteq\cR([V^{(0)},\scrR_2(U^{(0)},V^{(0)})])$.

We still have to numerically solve \eqref{eq:LOCG}. For that purpose, let $S_i\in\STM{\hat n_i}{n_i}$ be an orthonormal basis matrix of subspace
$\cR([U^{(0)},\scrR_1(U^{(0)},V^{(0)}),U^{(-1)}])$ and $\cR([V^{(0)},\scrR_2(U^{(0)},V^{(0)}),V^{(-1)}])$, respectively. Generically, $\hat n_i=3k$ but $\hat n_i<3k$ can happen.
It can be implemented by the Gram-Schmidt orthogonalization process. For example, for $\cR([U^{(0)},\scrR_1(U^{(0)},V^{(0)}),U^{(-1)}])$,
we notice that $U^{(0)}\in\STM{k}{n_1}$ already. In MATLAB, to fully take advantage of its optimized functions, we simply set
$S_1=[\scrR_1(U^{(0)},V^{(0)}),U^{(-1)}]$ (or $S_1=\scrR_1(U^{(0)},V^{(0)})$ for the first iteration) and then  do
\begin{equation}\label{eq:W-compute}
\framebox{
\begin{minipage}{6.5cm}
\tt S1=S1-U0*(U0'*S1); S1=orth(S1); \\
\tt S1=S1-U0*(U0'*S1); S1=orth(S1);\\
\tt S1=[U0,S1];
\end{minipage}
}
\end{equation}
where the first two lines  perform the classical Gram-Schmidt orthogonalization twice to almost ensure that
the resulting  columns of $S_1$ are fully orthogonal to the columns of $U^{(0)}$ at the end of the second line,
and {\tt orth} is a MATLAB function for
orthogonalization\footnote{
    Another option is to use MATLAB's thin {\tt qr}:
    {\tt [S1,$\sim$]=qr(S1,0)}. }.
It is important to note that the first $k$ columns of
the final $S_1$ are the same as those of $U^{(0)}$. Similarly, we compute $S_2\in\STM{\hat n_2}{n_2}$ such that
the first $k$ columns of $S_2$ are $V^{(0)}$. So we get
\begin{subequations}\label{eq:S1S2:LOCG}
\begin{align}
\cR([U^{(0)},\scrR_1(U^{(0)},V^{(0)}),U^{(-1)}])=\cR(S_1), \label{eq:S1:LOCG}\\
\cR([V^{(0)},\scrR_2(U^{(0)},V^{(0)}),V^{(-1)}])=\cR(S_2). \label{eq:S2:LOCG}
\end{align}
\end{subequations}
Hence \eqref{eq:LOCG-2} and \eqref{eq:LOCG-3} can be equivalently stated as
\begin{subequations}\label{eq:LOCGsub}
\begin{equation}\label{eq:LOCGsub:Y}
X=S_1Z_1\quad\mbox{for $Z_1\in\STM{k}{\hat n_1}$}, \quad
Y=S_2Z_2\quad\mbox{for $Z_2\in\STM{k}{\hat n_2}$}.
\end{equation}
Problem \eqref{eq:LOCG} becomes
\begin{equation}\label{eq:LOCGsub-1}
(Z_{1;\opt},Z_{2;\opt})=\arg\max_{Z_1\in\STM{k}{\hat n_1},\,Z_2\in\STM{k}{\hat n_2}}\, \wtd f(Z_1,Z_2),
\end{equation}
where, upon setting $\wtd B_{\ell}=S_1^{\HH}B_{\ell}S_2$,
\begin{equation}\label{eq:obj-eq:LOCGsub}
\wtd f(Z_1,Z_2):=f(S_1Z_1,S_2Z_2)
   =\sum_{\ell=1}^N\|\BDiag_{\tau_k}(Z_1^{\HH}\wtd B_{\ell}Z_2)\|_{\F}^2,
\end{equation}
in the same form of \eqref{eq:opt-pjbSVD} but with much smaller $\wtd B_{\ell}$.
\end{subequations}

\begin{lemma}\label{lm:LOCGreduced}
We have for the reduced problem~\eqref{eq:LOCGsub-1} with objective \eqref{eq:obj-eq:LOCGsub}
\begin{subequations}\label{eq:partD-relation}
\begin{align}
\wtd\scrH_1(Z_1,Z_2):=\nabla_{Z_1}\wtd f(Z_1,Z_2)=S_1^{\HH}\scrH_1(S_1Z_1,S_2Z_2), \label{eq:partD-relation-1} \\
\wtd\scrH_2(Z_1,Z_2):=\nabla_{Z_2}\wtd f(Z_1,Z_2)=S_2^{\HH}\scrH_2(S_1Z_1,S_2Z_2), \label{eq:partD-relation-2}
\end{align}
\end{subequations}
and, with $Z_1^{(0)}$ and $Z_2^{(0)}$ being the first $k$ columns of $I_{\hat n_1}$ and $I_{\hat n_2}$ respectively,
\begin{subequations}\label{eq:init-relation}
\begin{align}
\|\wtd\scrH_1(Z_1^{(0)},Z_2^{(0)})\|_{\tr}&=\|\scrH_1(U^{(0)},V^{(0)})\|_{\tr}, \label{eq:init-relation-1a}\\
\tr\big([Z_1^{(0)}]^{\HH}\wtd\scrH_1(Z_1^{(0)},Z_2^{(0)})\big)&=\tr\big([U^{(0)}]^{\HH}\scrH_1(U^{(0)},V^{(0)})\big),
            \label{eq:init-relation-1b} \\
\|\wtd\scrH_2(Z_1^{(0)},Z_2^{(0)})\|_{\tr}&=\|\scrH_2(U^{(0)},V^{(0)})\|_{\tr}, \label{eq:init-relation-2a}\\
\tr\big([Z_2^{(0)}]^{\HH}\wtd\scrH_2(Z_1^{(0)},Z_2^{(0)})\big)&=\tr\big([V^{(0)}]^{\HH}\scrH_2(U^{(0)},V^{(0)})\big).
            \label{eq:init-relation-2b}
\end{align}
\end{subequations}
\end{lemma}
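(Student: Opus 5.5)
The plan is to derive \eqref{eq:partD-relation} from the chain rule and then obtain \eqref{eq:init-relation} from it, using two facts: $S_iZ_i^{(0)}$ equals $U^{(0)}$ or $V^{(0)}$, and $\cR(S_i)$ contains the range of the relevant $\scrH_i$.

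\emph{Gradients.} Write $\wtd f(Z_1,Z_2)=f(S_1Z_1,S_2Z_2)$ and perturb $Z_1$ by $E$. The defining expansion \eqref{eq:f(P+E)} applied to $f$ in its $U$-argument gives
\[
\wtd f(Z_1+E,Z_2)=\wtd f(Z_1,Z_2)+\langle \scrH_1(S_1Z_1,S_2Z_2),S_1E\rangle+o(\|E\|_2),
\]
since $\|S_1E\|_2=\|E\|_2$. Moreover $\langle X,S_1E\rangle=\Re\tr(E^{\HH}S_1^{\HH}X)=\langle S_1^{\HH}X,E\rangle$. Because the gradient is unique, \eqref{eq:partD-relation-1} follows, and \eqref{eq:partD-relation-2} is identical. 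Alternatively, I can compute directly from \eqref{eq:opt-pjbSVD-partdiff} with $\wtd B_\ell=S_1^{\HH}B_\ell S_2$: the $i$th block column of $\wtd\scrH_1$ is $2\sum_\ell \wtd B_\ell Z_{2,i}(Z_{2,i}^{\HH}\wtd B_\ell^{\HH}Z_{1,i})$, which equals $S_1^{\HH}$ applied to the corresponding block of $\scrH_1(S_1Z_1,S_2Z_2)$.

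\emph{Initial relations.} By construction the first $k$ columns of $S_1$ are $U^{(0)}$, so $S_1Z_1^{(0)}=U^{(0)}$; likewise $S_2Z_2^{(0)}=V^{(0)}$. Hence $\wtd\scrH_1(Z_1^{(0)},Z_2^{(0)})=S_1^{\HH}H$ with $H:=\scrH_1(U^{(0)},V^{(0)})$. The trace identity \eqref{eq:init-relation-1b} is then immediate:
\[
\tr([Z_1^{(0)}]^{\HH}S_1^{\HH}H)=\tr([U^{(0)}]^{\HH}H).
\]

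The trace-norm identity \eqref{eq:init-relation-1a} is the only step needing an idea. The key point is that $\cR(H)\subseteq\cR(S_1)$. This holds because $H=\scrR_1(U^{(0)},V^{(0)})+U^{(0)}\sym([U^{(0)}]^{\HH}H)$ by \eqref{eq:Rs(U,V)-1}, and both terms lie in $\cR(S_1)$ by \eqref{eq:S1:LOCG}. Therefore $H=S_1S_1^{\HH}H$. Since $S_1$ has orthonormal columns, the singular values of $S_1(S_1^{\HH}H)$ coincide with those of $S_1^{\HH}H$: the Gram matrices agree, $(S_1^{\HH}H)^{\HH}(S_1^{\HH}H)=H^{\HH}S_1S_1^{\HH}H=H^{\HH}H$. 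This gives $\|S_1^{\HH}H\|_{\tr}=\|H\|_{\tr}$.

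The $V$-side identities \eqref{eq:init-relation-2a} and \eqref{eq:init-relation-2b} follow by the same argument using $\scrR_2$ and \eqref{eq:S2:LOCG}. The main point to watch is precisely this range inclusion: without it one only gets $\|S_1^{\HH}H\|_{\tr}\le\|H\|_{\tr}$.
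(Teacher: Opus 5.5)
Your proposal is correct and follows essentially the same route as the paper's proof. You derive the gradients from the first-order expansion with perturbation $S_1E$, use $S_1Z_1^{(0)}=U^{(0)}$ and $S_2Z_2^{(0)}=V^{(0)}$ for the trace identities, and get the trace-norm identities from $\scrH_1(U^{(0)},V^{(0)})=S_1S_1^{\HH}\scrH_1(U^{(0)},V^{(0)})$ via the $\scrR_1$ decomposition. The extra direct computation through $\wtd B_{\ell}$ and your Gram-matrix justification are both valid details beyond what the paper spells out.
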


\begin{proof}
Notice that $\wtd f(Z_1,Z_2)=f(S_1Z_1,S_2Z_2)$. According to the definition \eqref{eq:f(P+E)} of the Euclidean
gradient of a real-valued function, we have
\begin{align*}
\wtd f(Z_1+E,Z_2)&=\wtd f(Z_1,Z_2)+\langle\nabla_{Z_1}\wtd f(Z_1,Z_2),E\rangle+o(\|E\|_2), \\
\wtd f(Z_1+E,Z_2)&=f(S_1[Z_1+E],S_2Z_2)=f(S_1Z_1+S_1E,S_2Z_2) \\
  &=f(S_1Z_1,S_2Z_2)+\langle\left.\nabla_U f(U,V)\right|_{(U,V)=(S_1Z_1,S_2Z_2)},S_1E\rangle+o(\|E\|_2) \\
  &=\wtd f(Z_1,Z_2)+\langle\left.S_1^{\HH}\nabla_U f(U,V)\right|_{(U,V)=(S_1Z_1,S_2Z_2)},E\rangle+o(\|E\|_2),
\end{align*}
yielding
$$
\nabla_{Z_1}\wtd f(Z_1,Z_2)=\left.S_1^{\HH}\nabla_U f(U,V)\right|_{(U,V)=(S_1Z_1,S_2Z_2)}
   =S_1^{\HH}\scrH_1(S_1Z_1,S_2Z_2),
$$
which is \eqref{eq:partD-relation-1}. Similarly, we get \eqref{eq:partD-relation-2}.
By the way how $S_1$ and $S_2$ are computed, it can be seen that
$S_1Z_1^{(0)}=U^{(0)}$ and $S_2Z_2^{(0)}=V^{(0)}$. It follows from \eqref{eq:partD-relation-1} that
$\wtd\scrH_1(Z_1^{(0)},Z_2^{(0)})=S_1^{\HH}\scrH_1(U^{(0)},V^{(0)})$.
Immediately,
$$
[Z_1^{(0)}]^{\HH}\wtd\scrH_1(Z_1^{(0)},Z_2^{(0)})
  =[Z_1^{(0)}]^{\HH}S_1^{\HH}\scrH_1(S_1Z_1^{(0)},S_2Z_2^{(0)})
  =[U^{(0)}]^{\HH}\scrH_1(U^{(0)},V^{(0)}),
$$
leading to \eqref{eq:init-relation-1b}.
With the help of \eqref{eq:Rs(U,V)-1} and \eqref{eq:S1:LOCG}, we have
\begin{align*}
S_1S_1^{\HH}\scrH_1(U^{(0)},V^{(0)})
   &=S_1S_1^{\HH}\big[\scrR_1(U^{(0)},V^{(0)})+U^{(0)}\sym\big([U^{(0)}]^{\HH}\scrH_1(U^{(0)},V^{(0)})\big)\big]\\
   &=S_1S_1^{\HH}\scrR_1(U^{(0)},V^{(0)})+S_1S_1^{\HH}U^{(0)}\sym\big([U^{(0)}]^{\HH}\scrH_1(U^{(0)},V^{(0)})\big)\\
   &=\scrR_1(U^{(0)},V^{(0)})+U^{(0)}\sym\big([U^{(0)}]^{\HH}\scrH_1(U^{(0)},V^{(0)})\big) \\
   &=\scrH_1(U^{(0)},V^{(0)}),
\end{align*}
yielding
\begin{align*}
\|\wtd\scrH_1(Z_1^{(0)},Z_2^{(0)})\|_{\tr}
  &=\|S_1^{\HH}\scrH_1(U^{(0)},V^{(0)})\|_{\tr} \\
  &=\|S_1S_1^{\HH}\scrH_1(U^{(0)},V^{(0)})\|_{\tr}
  =\|\scrH_1(U^{(0)},V^{(0)})\|_{\tr},
\end{align*}
which is \eqref{eq:init-relation-1a}. The equalities \eqref{eq:init-relation-2a} and \eqref{eq:init-relation-2b}
can be verified in the same way.
\end{proof}

It is noted that each $\wtd B_{\ell}$ is of $\hat n_1\times \hat n_2$ and $\hat n_i\ll n_i$ for $i=1,2$ if $k\ll \{n_1,n_2\}/3$.
Solving the reduced problem~\eqref{eq:LOCGsub-1} by \Cref{alg:NPDo-pjbSVD} should be much faster.
We caution that, within \Cref{alg:NPDo-pjbSVD} for solving \eqref{eq:LOCGsub-1},
$\wtd\scrH_i(\cdot,\cdot)$ should be evaluated straight from $\wtd B_{\ell}$, not by the relationships in \eqref{eq:partD-relation} we just established.
\Cref{alg:accNPDo-pjbSVD} outlines what we have discussed so far.

\begin{algorithm}[t]
\caption{The LOCG-accelerated NPDo for solving \eqref{eq:opt-pjbSVD}}
\label{alg:accNPDo-pjbSVD}
\begin{algorithmic}[1]
\REQUIRE $B_{\ell}\in\bbC^{n_1\times n_2}$ for $1\le \ell\le N$
         (and, accordingly, matrix-valued functions $\scrH_i(\cdot,\cdot)$ for $i=1,2$ as in \eqref{eq:opt-pjbSVD-partdiff}),
         and initial $(U^{(0)},V^{(0)})\in\STM{k}{n_1}\times \STM{k}{n_2}$;
\ENSURE  an approximate maximizer pair of \eqref{eq:opt-pjbSVD}.
\STATE $U^{(-1)}=[\,]$, $V^{(-1)}=[\,]$; \% null matrices
\FOR{$j=0,1,\ldots$ until convergence}
    \STATE compute $R_i^{(j)}=\scrR_i(U^{(j)},V^{(j)})$ for $i=1,2$, where
           $\scrR_i(U^{(j)},V^{(j)})$ is calculated according to \eqref{eq:Rs(U,V)};
    \STATE compute $S_i\in\STM{\hat n_i}{n_i}$ for $i=1,2$ such that
           $\cR(S_1)=\cR([U^{(j)},R_1^{(j)},U^{(j-1)}])$
           and
           $\cR(S_2)=\cR([V^{(j)},R_2^{(j)},V^{(j-1)}])$
           as in \eqref{eq:W-compute} to ensure that the first $k$ columns of $S_1$ are the same as those of $U^{(j)}$
           and the first $k$ columns of $S_2$ are the same as those of $V^{(j)}$;
    \STATE solve \eqref{eq:LOCGsub-1} for $(Z_{1;\opt},Z_{2;\opt})$ by \Cref{alg:NPDo-pjbSVD}
          with initially $Z_i^{(0)}$
           being the first $k$ columns of $I_{\hat n_i}$;
    \STATE $U^{(j+1)}=S_1Z_{1;\opt}$, $V^{(j+1)}=S_2Z_{2;\opt}$;
\ENDFOR
\RETURN last $(U^{(j)},V^{(j)})$.
\end{algorithmic}
\end{algorithm}

\begin{remark}\label{rk:SCF4npd+LOCG}
There are a few  comments in order, regarding \Cref{alg:accNPDo-pjbSVD}.
\begin{enumerate}[(i)]
  \item Inheriting from \Cref{alg:NPDo-pjbSVD}, \Cref{alg:accNPDo-pjbSVD} also
        has two variants in one: with Jacobi-type updating or with Gauss-Seidel-type updating.
  \item The stopping criterion \eqref{eq:stop-pjbSVD} can be used at Line 2;
  \item It is important to compute $S_1$ at Line~4 in such a way, as explained moments ago, that its first $k$
        columns are exactly the same as those of $U^{(j)}$.
        This is because as $U^{(j)}$ converges, $U^{(j+1)}$ changes little from $U^{(j)}$ and hence $Z_{1;\opt}$
        is increasingly close to the first $k$ columns of $I_{\hat n_1}$.
        The same can be said about $S_2$. This explains the choice of $Z_i^{(0)}$
        at Line~5.
  \item At Line 4, some saving can be achieved by reusing qualities that are already computed. For example, we may use
        $$
        B_{\ell}V^{(j+1)}=(B_{\ell}S_2)Z_{2;\opt}, \quad
        [U^{(j+1)}]^{\HH}B_{\ell}V^{(j+1)}=Z_{1;\opt}^{\HH}(S_1^{\HH}B_{\ell}S_2)Z_{2;\opt}
        $$
        to compute
        the next $B_{\ell}V^{(j+1)}$ and $[U^{(j+1)}]^{\HH}B_{\ell}V^{(j+1)}$ at  costs $O(n_1k^2)$ and $O(k^3)$, respectively,
        instead of $O(n_1n_2k)$ by reusing $(B_{\ell}S_2)$ and $(S_1^{\HH}B_{\ell}S_2)$.
  \item An area of improvement is to solve the reduced problem \eqref{eq:LOCGsub-1} with an accuracy, comparable but fractionally better than the
        current $(U^{(j)},V^{(j)})$ as an approximate solution of \eqref{eq:opt-pjbSVD}.
        Specifically, if we use
        \eqref{eq:stop-pjbSVD} at Line~2 here to stop the for-loop: Lines 2--7, with tolerance $\epsilon$, then instead of using the same
        $\epsilon$ for \Cref{alg:NPDo-pjbSVD} at its Line 1 when the algorithm is called here at Line 5,
        we can use a fraction, say $1/8$,
        of $\varepsilon_{\KKT}$ evaluated at the current approximation $(U^{(j)},V^{(j)})$ as stopping tolerance within \Cref{alg:NPDo-pjbSVD}. This is what we do in our numerical experiment.
\end{enumerate}
\end{remark}

\section{Convergence Analysis}\label{sec:cvg-jbSVD}
In this section, we will perform a  convergence analysis for Algorithm~\ref{alg:NPDo-pjbSVD}.
For the sake of presentation, we introduce, for $i=1,2$,
\begin{equation}\label{eq:scrH4proof}
\scrH_i^{(j)}=\scrH_i(U^{(j)},V^{(j)})\,\,\mbox{for $i=1,2$, and}\quad
\what\scrH_2^{(j)}=\scrH_2(U^{(j+1)},V^{(j)}).
\end{equation}
Also
$\Theta(\cdot,\cdot)$ is the diagonal matrix of the canonical angles between two subspaces of equal dimension (see, e.g., \cite[appendix~A]{li:2024}, \cite{li:2026}, \cite{stsu:1990}).

\subsection{With Gauss-Seidel-type updating}
The following lemma is an equivalent restatement of \cite[Lemma 4.10]{moso:1983}
(see also \cite[Proposition 7]{kaqi:1999}) in the context of a metric space.
We will need it to prove one of the conclusions in our main theorem in this subsection, \Cref{thm:cvg4SCF4NPDo-GS:JSVD}.

\begin{lemma}[{\cite[Lemma 4.10]{moso:1983}}]\label{lm:isolatedconvg}
Let $\scrG$ be a metric space with metric $\dist(\cdot,\cdot)$, and let
$\{\by_i\}_{i=0}^{\infty}$ be a sequence in $\scrG$. If
$\by_*\in \scrG$ is an isolated accumulation point 	
of the sequence such that, for every subsequence $\{\by_i\}_{i\in\bbI}$
converging to $\by_*$, there is an infinite subset $\widehat{\bbI}\subseteq \bbI$ satisfying
$\dist(\by_i,\by_{i+1})\to 0$ as $\what\bbI\ni i\to\infty$,
then the entire sequence $\{\by_i\}_{i=0}^{\infty}$ converges to $\by_*$.
\end{lemma}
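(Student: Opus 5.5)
The plan is a direct argument by contradiction that exploits the isolation of $\by_*$. Because $\by_*$ is an isolated accumulation point, we can fix $\delta>0$ such that $\by_*$ is the only accumulation point of $\{\by_i\}$ in the closed ball $\{\by:\dist(\by,\by_*)\le\delta\}$. Suppose the whole sequence does not converge to $\by_*$. Then there is $\epsilon\in(0,\delta)$ such that $\dist(\by_i,\by_*)>\epsilon$ for infinitely many $i$. On the other hand, $\dist(\by_i,\by_*)<\epsilon$ also holds for infinitely many $i$, since $\by_*$ is an accumulation point. Hence the sequence enters the $\epsilon$-ball around $\by_*$ infinitely often and leaves it infinitely often.

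We use this to build the index set $\bbI$. Let $\bbI$ consist of the ``exit'' indices, namely those $i$ with $\dist(\by_i,\by_*)\le\epsilon$ and $\dist(\by_{i+1},\by_*)>\epsilon$; this set is infinite by the previous paragraph. We must check that $\{\by_i\}_{i\in\bbI}$ converges to $\by_*$. Every accumulation point of this subsequence is an accumulation point of the full sequence and lies in the closed $\epsilon$-ball, hence in the $\delta$-ball, so it equals $\by_*$.

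This step is the main obstacle: in a general metric space, knowing that every accumulation point equals $\by_*$ does not by itself give convergence, because a subsequence might have no accumulation points at all. One fix is to use the intended setting: there $\scrG$ is the product of the two Stiefel manifolds, which is compact, so the closed ball is compact and the subsequence converges. For a general metric space, the cleanest route is to read ``isolated accumulation point'' as Mor\'e--Sorensen do. Alternatively, we can replace $\bbI$ by the indices of a convergent subsequence of the exit indices, if one exists. If none exists, we pick a sub-subsequence staying a positive distance from $\by_*$, and this contradicts the hypothesis only when combined with the required condition $\dist(\by_i,\by_{i+1})\to0$. I would state this compactness caveat explicitly and argue under the assumption that bounded sets are relatively compact, which covers our application.

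With $\{\by_i\}_{i\in\bbI}\to\by_*$ in hand, the hypothesis gives an infinite $\what\bbI\subseteq\bbI$ with $\dist(\by_i,\by_{i+1})\to0$ along $\what\bbI$. Then for $i\in\what\bbI$,
\[
\dist(\by_{i+1},\by_*)\le\dist(\by_{i+1},\by_i)+\dist(\by_i,\by_*)\to0 .
\]
This contradicts $\dist(\by_{i+1},\by_*)>\epsilon$, which holds for every exit index $i$. Therefore the entire sequence converges to $\by_*$.
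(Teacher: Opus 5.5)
The paper gives no proof of this lemma; it only cites \cite{moso:1983} (and \cite{kaqi:1999}), where the setting is $\bbR^n$. Your exit-index argument is the standard proof there, and it is correct once some closed ball about $\by_*$ is compact. Shrink $\delta$ so that the closed $\delta$-ball is compact. The exit points $\{\by_i\}_{i\in\bbI}$ then lie in a compact set in which every accumulation point of the sequence equals $\by_*$. So every subsequence of them has a further subsequence converging to $\by_*$, hence $\{\by_i\}_{i\in\bbI}\to\by_*$, and your triangle-inequality step gives the contradiction. Local compactness at $\by_*$ is all you need; ``bounded sets are relatively compact'' is more than necessary.

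The genuine gap is the one you flagged. Your proposed ways around it do not work, and no way can, because the statement is false for a general metric space. Your fallback fails because the hypothesis constrains only subsequences that converge to $\by_*$. If the exit points have no convergent subsequence, the hypothesis says nothing about them and no contradiction arises. Re-reading ``isolated accumulation point'' does not help either: Mor\'e and Sorensen use the same notion, and what they have in addition is compactness of closed balls in $\bbR^n$.

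Here is a counterexample in $\ell^2$ with orthonormal vectors $e_1,e_2,\ldots$. For $m=1,2,\ldots$ in turn, let the sequence walk from $0$ to $e_m$ and back to $0$ along the segment joining them, in steps of length $1/m$.
\begin{itemize}
\item The step lengths satisfy $\dist(\by_i,\by_{i+1})\to0$ along the whole sequence, so the hypothesis holds for every subsequence.
\item Each block is finite, so any convergent subsequence has block index $m\to\infty$. Its limit $z$ therefore satisfies $\langle z,e_p\rangle=0$ for every $p$, i.e.\ $z=0$. Since $0$ is visited infinitely often, it is the unique, hence isolated, accumulation point.
\item Yet $\|\by_i\|=1$ infinitely often, so the sequence does not converge to $0$.
\end{itemize}
So the paper's description of the lemma as an equivalent restatement of \cite{moso:1983} for an arbitrary metric space is inaccurate. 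You should make the compactness hypothesis part of the lemma's statement rather than an optional caveat. This costs the paper nothing: the lemma is used only in the proof of \Cref{thm:cvg4SCF4NPDo-GS:JSVD}(d), with $\scrG=\STM{k}{n_1}\times\STM{k}{n_2}$, which is compact. There your proof applies verbatim.
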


Both \Cref{thm:cvg4SCF4NPDo-GS:JSVD,thm:cvg4SCF4accNPDo-GS:JSVD} bear similarity to
the corresponding convergence theorems for the NPDo theory in \cite{li:2024} and its updated version at arXiv.

\begin{theorem}\label{thm:cvg4SCF4NPDo-GS:JSVD}
Let the sequence $\{(U^{(j)},V^{(j)})\}_{j=0}^{\infty}$ be generated by \Cref{alg:NPDo-pjbSVD}
with Gauss-Seidel-type updating.
The following statements hold.
\begin{enumerate}[{\rm (a)}]
  \item The sequence $\{f(U^{(j)},V^{(j)})\}_{j=0}^{\infty}$ is monotonically increasing and convergent;
  \item Let $(U_*,V_*)$ be an accumulation point
        of the sequence $\{(U^{(j)},V^{(j)})\}_{j=0}^{\infty}$, and let
        $\{(U^{(j)},V^{(j)})\}_{j\in\bbI}$ be a convergent subsequence that converges to $(U_*,V_*)$.
        We have
        \begin{subequations}\label{eq:pjbSVD:KKTatMAX'-GS}
        \begin{align}
        \scrH_1(U_*,V_*)&=U_*\Lambda_{1*}, \quad \Lambda_{1*}=U_*^{\HH}\scrH_1(U_*,V_*)\succeq 0,
                 \label{eq:pjbSVD:KKTatMAX'-GS-1} \\
        \scrH_2(\what U_*,V_*)&=V_*\Lambda_{2*}, \quad \Lambda_{2*}=V_*^{\HH}\scrH_2(\what U_*,V_*)\succeq 0,
                 \label{eq:pjbSVD:KKTatMAX'-GS-2}
        \end{align}
        \end{subequations}
        where $\what U_*$ is any accumulation point of $\{U^{(j+1)}\}_{j\in\bbI}$.
        As a result, \eqref{eq:pjbSVD:KKTatMAX} holds if also $\what U_*=U_*$.
  \item In item~{\rm (b)}, if $\rank(\scrH_1(U_*,V_*))=k$, then $\what U_*=U_*$ and hence \eqref{eq:pjbSVD:KKTatMAX} holds.
  \item If $(U_*,V_*)$ is an isolated accumulation point of $\{(U^{(j)},V^{(j)})\}_{j=0}^{\infty}$
        and if
        \begin{equation}\label{eq:full-rank3}
        \rank(\scrH_i(U_*,V_*))=k\quad
         \mbox{for $i=1,2$},
        \end{equation}
        then the entire sequence $\{(U^{(j)},V^{(j)})\}_{j=0}^{\infty}$ converges to $(U_*,V_*)$.
  \item We have four convergent series
        \begin{subequations}\label{eq:cvg4SCF4NPDo-GS:JSVD:series}
        \begin{align}
        \sum_{j=0}^{\infty}\sigma_{\min}(\scrH_1^{(j)})\,
                         \big\|\sin\Theta\big(\cR(U^{(j+1)}),\cR(U^{(j)})\big)\big\|_{\F}^2
                      &<\infty,    \label{eq:cvg4SCF4NPDo-GS:JSVD:series-1} \\
        \sum_{j=0}^{\infty}\sigma_{\min}(\what\scrH_2^{(j)})\,
                         \big\|\sin\Theta\big(\cR(V^{(j+1)}),\cR(V^{(j)})\big)\big\|_{\F}^2
                      &<\infty,    \label{eq:cvg4SCF4NPDo-GS:JSVD:series-1a} \\
        \sum_{j=0}^{\infty}\sigma_{\min}(\scrH_1^{(j)})\,
                  \frac {\big\|\scrH_1^{(j)}-U^{(j)}\big([U^{(j)}]^{\HH}\scrH_1^{(j)}\big)\big\|_{\F}^2}
                        {\big\|\scrH_1^{(j)}\big\|_{\F}^2}
                      &<\infty,                 \label{eq:cvg4SCF4NPDo-GS:JSVD:series-2} \\
        \sum_{j=0}^{\infty}\sigma_{\min}(\what\scrH_2^{(j)})\,
                  \frac {\big\|\what\scrH_2^{(j)}-V^{(j)}\big([V^{(j)}]^{\HH}\what\scrH_2^{(j)}\big)\big\|_{\F}^2}
                        {\big\|\what\scrH_2^{(j)}\big\|_{\F}^2}
                      &<\infty.                 \label{eq:cvg4SCF4NPDo-GS:JSVD:series-2a}
        \end{align}
        \end{subequations}
\end{enumerate}
\end{theorem}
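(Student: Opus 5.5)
The proof rests on one quantitative inequality per half-step, obtained from \Cref{thm:f(UV)-Ansatz} together with the standard polar-decomposition gap estimate used in \cite{li:2024}. Let $H\in\bbC^{n\times k}$ have polar decomposition $H=\what P\Lambda$ with $\Lambda\succeq 0$, and let $P\in\STM{k}{n}$. Then
\begin{equation*}
\Re\tr(\what P^{\HH}H)-\Re\tr(P^{\HH}H)=\|H\|_{\tr}-\Re\tr(P^{\HH}H)\ge \tfrac12\,\sigma_{\min}(H)\,\|\what P-P\|_{\F}^2 .
\end{equation*}
To see this, write $\|H\|_{\tr}-\Re\tr(P^{\HH}\what P\Lambda)=\Re\tr\big((I-P^{\HH}\what P)\Lambda\big)$. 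Use $\Re\tr(I-P^{\HH}\what P)=\frac12\|\what P-P\|_{\F}^2$, and note that $\sym(I-P^{\HH}\what P)\succeq 0$, which follows because $\|P^{\HH}\what P\|_2\le 1$. The same difference is also bounded below by $\sigma_{\min}(H)$ times $\|\sin\Theta(\cR(\what P),\cR(P))\|_{\F}^2$, with a constant; this is the form in \cite{li:2024} and the form needed for (e). Apply this with $H=\scrH_1^{(j)}$, $P=U^{(j)}$, $\what P=U^{(j+1)}$ and use \Cref{thm:f(UV)-Ansatz}(a), where $\tr(U^{(j)\HH}\scrH_1^{(j)})=f(U^{(j)},V^{(j)})$ by \eqref{eq:recover-f}. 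This gives $f(U^{(j+1)},V^{(j)})-f(U^{(j)},V^{(j)})\ge c\,\sigma_{\min}(\scrH_1^{(j)})\|\sin\Theta(\cR(U^{(j+1)}),\cR(U^{(j)}))\|_{\F}^2\ge 0$. Doing the same with $H=\what\scrH_2^{(j)}$ and part (b) gives the analogous increase from $f(U^{(j+1)},V^{(j)})$ to $f(U^{(j+1)},V^{(j+1)})$.

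Item (a) then follows because $f$ is bounded on the compact set $\STM{k}{n_1}\times\STM{k}{n_2}$. Telescoping the two increments yields \eqref{eq:cvg4SCF4NPDo-GS:JSVD:series-1} and \eqref{eq:cvg4SCF4NPDo-GS:JSVD:series-1a}. For \eqref{eq:cvg4SCF4NPDo-GS:JSVD:series-2} and \eqref{eq:cvg4SCF4NPDo-GS:JSVD:series-2a}, note that $\cR(U^{(j+1)})=\cR(\scrH_1^{(j)})$ when $\scrH_1^{(j)}$ has full rank. Hence $\|(I-U^{(j)}U^{(j)\HH})\scrH_1^{(j)}\|_{\F}\le\|\sin\Theta(\cR(U^{(j+1)}),\cR(U^{(j)}))\|_{\F}\,\|\scrH_1^{(j)}\|_{2}$, since $(I-U^{(j)}U^{(j)\HH})\scrH_1^{(j)}=(I-U^{(j)}U^{(j)\HH})U^{(j+1)}U^{(j+1)\HH}\scrH_1^{(j)}$. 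Dividing by $\|\scrH_1^{(j)}\|_{\F}^2\ge\|\scrH_1^{(j)}\|_2^2$ bounds each term by the corresponding term of the first series. When $\sigma_{\min}(\scrH_1^{(j)})=0$ the term is zero anyway.

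For (b), pass to a subsequence of $\bbI$ along which $U^{(j+1)}\to\what U_*$. Since $f(U^{(j)},V^{(j)})$, $f(U^{(j+1)},V^{(j)})$ and $f(U^{(j+1)},V^{(j+1)})$ all converge to the same limit, the first increment tends to zero. That increment is at least $\|\scrH_1^{(j)}\|_{\tr}-\tr(U^{(j)\HH}\scrH_1^{(j)})$, so by continuity $\|\scrH_1(U_*,V_*)\|_{\tr}=\Re\tr(U_*^{\HH}\scrH_1(U_*,V_*))$. Equality in the trace-norm bound $\Re\tr(U^{\HH}H)\le\|H\|_{\tr}$ forces $U_*$ to be an orthonormal polar factor of $H$, which gives \eqref{eq:pjbSVD:KKTatMAX'-GS-1}. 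The argument for $V$ is the same with $\what\scrH_2^{(j)}\to\scrH_2(\what U_*,V_*)$ and $V^{(j)}\to V_*$, giving \eqref{eq:pjbSVD:KKTatMAX'-GS-2}.

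For (c), note that $\what U_*$ is the limit of the polar factors of $\scrH_1^{(j)}\to\scrH_1(U_*,V_*)$. The polar factor is unique and continuous at full-rank matrices, so $\what U_*$ equals the polar factor of $\scrH_1(U_*,V_*)$, which is $U_*$ by (b).

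Item (d) goes through \Cref{lm:isolatedconvg} with $\dist$ the Frobenius distance on pairs, and establishing $\dist\to0$ along a subsequence is the main obstacle. The plan is as follows. Along any subsequence converging to $(U_*,V_*)$, (c) gives $U^{(j+1)}\to U_*$. Continuity then gives $\what\scrH_2^{(j)}\to\scrH_2(U_*,V_*)$, which has full rank, so by continuity of the polar factor $V^{(j+1)}\to$ the polar factor of $\scrH_2(U_*,V_*)$. That polar factor is $V_*$ by \eqref{eq:pjbSVD:KKTatMAX'-GS-2} with $\what U_*=U_*$ and by uniqueness. Hence $\dist\big((U^{(j)},V^{(j)}),(U^{(j+1)},V^{(j+1)})\big)\to0$ along the whole subsequence, and \Cref{lm:isolatedconvg} completes the argument. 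Care is needed here: the positive semidefinite polar factor $\Lambda_{2*}$ must be identified with the unique one, which is why the full-rank hypothesis \eqref{eq:full-rank3} is required for both $i=1,2$.
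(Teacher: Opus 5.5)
Your proposal is correct and follows essentially the paper's route. Both arguments use the half-step ascent from \Cref{thm:f(UV)-Ansatz} with telescoping for (a) and (e), continuity together with the equality case of $\Re\tr(P^{\HH}H)\le\|H\|_{\tr}$ for (b), uniqueness of the polar factor at full rank for (c), and \Cref{lm:isolatedconvg} for (d). The differences are minor: you derive in-line the gap bound $\tfrac12\sigma_{\min}(H)\|\what P-P\|_{\F}^2$ and the residual-versus-$\sin\Theta$ bound that the paper imports from \cite{li:2026}, and you prove (b) directly from the vanishing of the increments rather than by contradiction.
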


\begin{proof}
With  Gauss-Seidel-type updating, the flow of computation goes as follows:
for $j=0,1,2,\ldots$
\begin{equation}\label{eq:flow-GS}
    (U^{(j)},V^{(j)})
\to (U^{(j+1)},V^{(j)})
\to (U^{(j+1)},V^{(j+1)})  .
\end{equation}
Recall \eqref{eq:scrH4proof} and let
\begin{subequations}\label{eq:cvg4SCF4NPDo-GS:JSVD:pf-1}
\begin{align}
\eta_{j+1/2}&=\tr\big([U^{(j+1)}]^{\HH}\scrH_1^{(j)}\big)-\tr\big([U^{(j)}]^{\HH}\scrH_1^{(j)}\big)
            \label{eq:cvg4SCF4NPDo-GS:JSVD:pf-1aa}\\
          &=\|\scrH_1^{(j)}\|_{\tr}-\tr\big([U^{(j)}]^{\HH}\scrH_1^{(j)}\big), \label{eq:cvg4SCF4NPDo-GS:JSVD:pf-1ab}\\
\eta_{j+1}&=\tr\big([V^{(j+1)}]^{\HH}\what\scrH_2^{(j)}\big)-\tr\big([V^{(j)}]^{\HH}\what\scrH_2^{(j)}\big)
            \label{eq:cvg4SCF4NPDo-GS:JSVD:pf-1ba}\\
          &=\|\what\scrH_2^{(j)}\|_{\tr}-\tr\big([V^{(j)}]^{\HH}\what\scrH_2^{(j)}\big),
            \label{eq:cvg4SCF4NPDo-GS:JSVD:pf-1bb}
\end{align}
\end{subequations}
where \eqref{eq:cvg4SCF4NPDo-GS:JSVD:pf-1ab} and \eqref{eq:cvg4SCF4NPDo-GS:JSVD:pf-1bb} are due to how
$U^{(j+1)}$ and $V^{(j+1)}$ are defined in \Cref{alg:NPDo-pjbSVD}. Both
$\eta_{j+1/2}$ and $\eta_{j+1}$ are nonnegative by \cite[Lemma~4.2]{li:2026}. Now use
\Cref{thm:f(UV)-Ansatz} to conclude
\begin{subequations}\label{eq:cvg4SCF4NPDo-GS:JSVD:pf-2'}
\begin{align}
f(U^{(j+1)},V^{(j)})&\ge f(U^{(j)},V^{(j)})+ \eta_{j+1/2}, \label{eq:cvg4SCF4NPDo-GS:JSVD:pf-2'a}\\
f(U^{(j+1)},V^{(j+1)})&\ge f(U^{(j+1)},V^{(j)})+ \eta_{j+1}, \label{eq:cvg4SCF4NPDo-GS:JSVD:pf-2'b}
\end{align}
\end{subequations}
yielding $f(U^{(j+1)},V^{(j+1)})\ge f(U^{(j+1)},V^{(j)})\ge f(U^{(j)},V^{(j)})$. This proves item (a).
Along the way, we also showed
\begin{align}
f(U^{(j+1)},V^{(j+1)})\ge f(U^{(j)},V^{(j)})
   &+\Big[\|\what\scrH_2^{(j)}\|_{\tr}-\tr\big([V^{(j)}]^{\HH}\what\scrH_2^{(j)}\big)\Big] \nonumber \\
   &+\Big[\|\scrH_1^{(j)}\|_{\tr}-\tr\big([U^{(j)}]^{\HH}\scrH_1^{(j)}\big)\Big].
        \label{eq:cvg4SCF4NPDo-GS:JSVD:pf-3}
\end{align}

We now prove item~(b).
Without loss of generality, we may  assume that $\{U^{(j+1)}\}_{j\in\bbI}$ converges to $\what U_*$; otherwise,
we can pick up a convergent subsequence of $\{U^{(j+1)}\}_{j\in\bbI}$ and reassign $\bbI$ according to the convergent subsequence.
We have
\begin{equation}\label{eq:cvg4SCF4NPDo-GS:JSVD:pf-5}
\lim_{\bbI\ni j\to \infty}U^{(j)}=U_*, \quad
\lim_{\bbI\ni j\to \infty}V^{(j)}=V_*, \quad
\lim_{\bbI\ni j\to \infty}U^{(j+1)}=\what U_*.
\end{equation}
Notice that the contrary to \eqref{eq:pjbSVD:KKTatMAX'-GS-1} is
\begin{subequations}\label{eq:cvg4SCF4NPDo-GS:JSVD:pf-6}
\begin{equation}\label{eq:cvg4SCF4NPDo-GS:JSVD:pf-6a}
\mbox{either $\cR(\scrH_1(U_*,V_*))\not\subseteq\cR(U_*)$ or
$U_*^{\HH}\scrH_1(U_*,V_*)\not\succeq 0$},
\end{equation}
and the contrary to \eqref{eq:pjbSVD:KKTatMAX'-GS-2} is
\begin{equation}\label{eq:cvg4SCF4NPDo-GS:JSVD:pf-6b}
\mbox{either $\cR(\scrH_2(\what U_*,V_*))\not\subseteq\cR(V_*)$ or
$V_*^{\HH}\scrH_2(\what U_*,V_*)\not\succeq 0$}.
\end{equation}
\end{subequations}
Hence the contrary to \eqref{eq:pjbSVD:KKTatMAX'-GS} is that at least one of the four relations in \eqref{eq:cvg4SCF4NPDo-GS:JSVD:pf-6} is true, which, by \cite[Lemma~4.2]{li:2026}, implies
that either $\delta_1>0$ or $\delta_2>0$ where
\begin{subequations}\label{eq:cvg4SCF4NPDo-GS:JSVD:pf-deltas}
\begin{align}
\delta_1&:=\|\scrH_1(U_*,V_*)\|_{\tr}-\tr(U_*^{\HH}\scrH_1(U_*,V_*))\ge 0, \label{eq:cvg4SCF4NPDo-GS:JSVD:pf-deltas-1}\\
\delta_2&:=\|\scrH_2(\what U_*,V_*)\|_{\tr}-\tr(V_*^{\HH}\scrH_2(\what U_*,V_*))\ge 0, \label{eq:cvg4SCF4NPDo-GS:JSVD:pf-deltas-2}
\end{align}
\end{subequations}
and therefore $\delta:=\delta_1+\delta_2>0$.
Since $\|\scrH_i(U,V)\|_{\tr}$ for $i=1,2$, $\tr(U^{\HH}\scrH_1(U,V))$,
$\tr(V^{\HH}\scrH_2(U,V))$, and $f(U,V)$ are continuous in $(U,V)\in\bbC^{n_1\times k}\times\bbC^{n_2\times k}$,
there is an $j_0\in\bbI$ such that
\begin{subequations}\label{eq:cvg4SCF4NPDo-GS:JSVD:pf-7}
\begin{gather}
\Big|\|\scrH_1(U_*,V_*)\|_{\tr}-\|\scrH_1^{(j_0)}\|_{\tr}\Big| <\delta/5, \label{eq:thm:cvg4SCF4NPDo-GS:pf-7a}\\
\Big|\tr((U^{(j_0)})^{\HH}\scrH_1^{(j_0)})-\tr(U_*^{\HH}\scrH_1(U_*,V_*))\Big|<\delta/5, \label{eq:thm:cvg4SCF4NPDo-GS:pf-7b}\\
\Big|\|\scrH_2(\what U_*,V_*)\|_{\tr}-\|\what\scrH_2^{(j_0)}\|_{\tr}\Big| <\delta/5, \label{eq:thm:cvg4SCF4NPDo-GS:pf-7c}\\
\Big|\tr((V^{(j_0)})^{\HH}\scrH_2^{(j_0)})-\tr(V_*^{\HH}\scrH_2(\what U_*,V_*))\Big|<\delta/5, \label{eq:thm:cvg4SCF4NPDo-GS-subs-1:pf-7d}\\
f(U_*,V_*)-\delta/10<f(U^{(j_0)},V^{(j_0)})\le f(U_*,V_*).  \label{eq:thm:cvg4SCF4NPDo-GS-subs-1:pf-7e}
\end{gather}
\end{subequations}
By \eqref{eq:cvg4SCF4NPDo-GS:JSVD:pf-3}, we have
\begin{align*}
f(U^{(j_0+1)},V^{(j_0+1)})&\ge f(U^{(j_0)},V^{(j_0)})
       +\Big[\|\what\scrH_2^{(j_0)}\|_{\tr}-\tr\big([V^{(j_0)}]^{\HH}\what\scrH_2^{(j_0)}\big)\Big]  \\
  &\hphantom{\ge f(U^{(j_0)},V^{(j_0)})}\,
       +\Big[\|\scrH_1^{(j_0)}\|_{\tr}-\tr\big([U^{(j_0)}]^{\HH}\scrH_1^{(j_0)}\big)\Big] \\
  &>f(U_*,V_*)-\frac {\delta}{10}
        +\Big[\|\scrH_2(\what U_*,V_*)\|_{\tr}-\frac {\delta}5-\tr(V_*^{\HH}\scrH_2(\what U_*,V_*))-\frac {\delta}5\Big] \\
  &\hphantom{>f(U_*,V_*)-\frac {\delta}{10}}\,
        +\Big[\|\scrH_1(U_*,V_*)\|_{\tr}-\frac {\delta}5-\tr(U_*^{\HH}\scrH_1(U_*,V_*))-\frac {\delta}5\Big] \\
  &=f(U_*,V_*)+\frac {\delta}{10}>f(U_*,V_*),
\end{align*}
contradicting $f(U^{(i)},V^{(i)})\le\lim_{j\to\infty}f(U^{(j)},V^{(j)})= f(U_*,V_*)$ for all $i$.
This completes the proof of item~(b).

Since we always have $\scrH_1(U^{(j)},V^{(j)})=U^{(j+1)}\Lambda_1^{(j)}$, a polar decomposition. Letting $\bbI\ni j\to\infty$
yields $\scrH_1(U_*,V_*)=\what U_*\what \Lambda_{1*}$, a polar decomposition also.
If $\rank(\scrH_1(U_*,V_*))=k$, then the polar decomposition of $\scrH_1(U_*,V_*)$ is
unique \cite{high:2008,li:1993b,li:1995,li:2014HLA} and hence $\what U_*=U_*$ by the fact that
\eqref{eq:pjbSVD:KKTatMAX'-GS-1} is another polar decomposition of $\scrH_1(U_*,V_*)$,
completing the proof of item (c).

We now prove item~(d) with the help of \Cref{lm:isolatedconvg}. Let
$\{(U^{(j)},V^{(j)})\}_{j\in\bbI_1}$ be a convergent subsequence that converges to $(U_*,V_*)$.
Since $\{(U^{(j+1)},V^{(j+1)})\}_{j\in\bbI_1}$ is a bounded sequence, it has a convergent subsequence
$\{(U^{(j+1)},V^{(j+1)})\}_{j\in\bbI_2}$ that converges to $(\what U_*,\what V_*)$,
where $\bbI_2\subseteq\bbI_1$.
Use the same argument for proving item (c) to conclude $\what U_*=U_*$.
Always $\scrH_2(U^{(j+1)},V^{(j)})=V^{(j+1)}\Lambda_2^{(j)}$, a polar decomposition.
Letting $\bbI_1\supseteq\bbI_2\ni j\to\infty$ yields
$\scrH_2(U_*,V_*)=\what V_*\Lambda_{2*}$ where we have used  $\what U_*=U_*$. This is yet another polar decomposition of $\scrH_2(U_*,V_*)$, besides
\eqref{eq:pjbSVD:KKTatMAX'-GS-2}. It follows from \eqref{eq:full-rank3} that
$\scrH_2(U_*,V_*)$ has a unique polar decomposition, implying $\what V_*=V_*$.
Hence as $\bbI_2\ni j\to\infty$
$$
\|U^{(j)}-U^{(j+1)}\|_{\F}\le\|U^{(j)}-U_*\|_{\F}+\|U_*-U^{(j+1)}\|_{\F}\to 0,
$$
and similarly $\|V^{(j)}-V^{(j+1)}\|_{\F}\to 0$.
Now use Lemma~\ref{lm:isolatedconvg} to conclude that the entire sequence
$\{(U^{(j)},V^{(j)})\}_{i=0}^{\infty}$ converges to $(U_*,V_*)$, as needed.

Finally for item (e), we return to \eqref{eq:cvg4SCF4NPDo-GS:JSVD:pf-3}.
By \cite[Theorem~4.1]{li:2026}, we have
\begin{subequations}\label{eq:cvg4SCF4NPDo-GS:JSVD:pf-8}
\begin{gather}
\frac {\big\|\scrH_1^{(j)}-U^{(j)}\big([U^{(j)}]^{\HH}\scrH_1^{(j)}\big)\big\|_{\F}^2}
                        {\big\|\scrH_1^{(j)}\big\|_{\F}^2}
    \le\big\|\sin\Theta\big(\cR(U^{(j+1)}),\cR(U^{(j)})\big)\big\|_{\F}^2
    \le\frac {2\eta_{j+1/2}}{\sigma_{\min}(\scrH_1^{(j)})}, \label{eq:cvg4SCF4NPDo-GS:JSVD:pf-8a}\\
\frac {\big\|\what\scrH_2^{(j)}-V^{(j)}\big([V^{(j)}]^{\HH}\what\scrH_2^{(j)}\big)\big\|_{\F}^2}
                        {\big\|\what\scrH_2^{(j)}\big\|_{\F}^2}
    \le\big\|\sin\Theta\big(\cR(V^{(j+1)}),\cR(V^{(j)})\big)\big\|_{\F}^2
    \le\frac {2\eta_{j+1}}{\sigma_{\min}(\what\scrH_2^{(j)})}. \label{eq:cvg4SCF4NPDo-GS:JSVD:pf-8b}
\end{gather}
\end{subequations}
It
follows from \eqref{eq:cvg4SCF4NPDo-GS:JSVD:pf-2'} that
$$
2\sum_{j=0}^{\infty}(\eta_{j+1/2}+\eta_{j+1})\le\lim_{j\to\infty} f(U^{(j+1)},V^{(j+1)})-f(U^{(0)},V^{(0)})<\infty,
$$
combining which with \eqref{eq:cvg4SCF4NPDo-GS:JSVD:pf-8} yields \eqref{eq:cvg4SCF4NPDo-GS:JSVD:series}. The proof is completed.
\end{proof}

As a corollary of Theorem~\ref{thm:cvg4SCF4NPDo-GS:JSVD}(e),
if $\sigma_{\min}(\scrH_1^{(j)})$ is eventually bounded below away from $0$
uniformly\footnote {By which we mean that there exist a constant $c>0$ and an integer $K$ such that
  $\sigma_{\min}(\scrH_1^{(j)})\ge c$ for all $j\ge K$.},
then
\begin{subequations}\label{eq:NPDo-always}
\begin{equation}\label{eq:eq:NPDo-always-1}
\lim_{j\to\infty}\frac {\big\|\scrH_1^{(j)}-U^{(j)}\big([U^{(j)}]^{\HH}\scrH_1^{(j)}\big)\big\|_{\F}}
                        {\big\|\scrH_1^{(j)}\big\|_{\F}} =0,
\end{equation}
namely, increasingly $\scrH_1(U^{(j)},V^{(j)})\approx U^{(j)}\big([U^{(j)}]^{\HH}\scrH_1(U^{(j)},V^{(j)})\big)$ towards
a polar decomposition of $\scrH_1(U^{(j)},V^{(j)})$, which means that $(U^{(j)},V^{(j)})$ becomes a more and more accurate approximate solution
to NPDo \eqref{eq:pjbSVD:KKT-1}, even in the absence of knowing whether the entire sequence $\{(U^{(j)},V^{(j)})\}_{j=0}^{\infty}$ converges or not. For the same reason, if $\sigma_{\min}(\what\scrH_2^{(j)})$ is eventually bounded below away from $0$
uniformly, then
\begin{equation}\label{eq:NPDo-always-2}
\lim_{j\to\infty}
\frac {\big\|\what\scrH_2^{(j)}-V^{(j)}\big([V^{(j)}]^{\HH}\what\scrH_2^{(j)}\big)\big\|_{\F}}
                        {\big\|\what\scrH_2^{(j)}\big\|_{\F}}=0,
\end{equation}
\end{subequations}
namely, increasingly $\scrH_2(U^{(j+1)},V^{(j)})\approx V^{(j)}\big([V^{(j)}]^{\HH}\scrH_2(U^{(j+1)},V^{(j)})\big)$,
regardless whether the entire sequence $\{(U^{(j+1)},V^{(j)})\}_{j=0}^{\infty}$ converges or not.

Next we will analyze the convergence of \Cref{alg:accNPDo-pjbSVD} that employs an inner and outer iterative scheme.
Each loop of the outer iteration produces a reduced problem~\eqref{eq:LOCGsub} that has to be solved
by the SCF iteration, the inner iteration,  of \Cref{alg:NPDo-pjbSVD}. In theory, each reduced problem can be solved
to full convergence as guaranteed by \Cref{thm:cvg4SCF4NPDo-GS:JSVD}, but that is not practical and wasteful, too, as
we argued in \Cref{rk:SCF4npd+LOCG}(v). For our analysis to go through, we minimally require that, at Line 5 when \Cref{alg:NPDo-pjbSVD} is called, it
produces next approximate pair $(U^{(j+1)},V^{(j+1)})$ that satisfies
\begin{multline}\label{eq:SCF4accNPDo-GS:progress}
f(U^{(j+1)},V^{(j+1)})-f(U^{(j)},V^{(j)}) \\
   \ge c\,\max\Big\{\underbrace{\|\scrH_1^{(j)}\|_{\tr}-\tr([U^{(j)}]^{\HH}\scrH_1^{(j)})}_{=:\eta_{*1}^{(j)}},
        \underbrace{\|\scrH_2^{(j)}\|_{\tr}-\tr([V^{(j)}]^{\HH}\scrH_2^{(j)})}_{=:\eta_{*2}^{(j)}}\Big\},
\end{multline}
where $c>0$ is a constant, independent of $j$. To explain why this is somewhat a minimal requirement.
Consider one NPDo SCF iterative step on
$\wtd \scrH_1(Z_1^{(0)},Z_2^{(0)})$ of the reduced problem~\eqref{eq:LOCGsub} at the entry to \Cref{alg:NPDo-pjbSVD}.
By \Cref{thm:f(UV)-Ansatz}(a), it will
increase $\wtd f$ by
\begin{align*}
\wtd f(Z_1^{(1)},Z_2^{(0)})-\wtd f(Z_1^{(0)},Z_2^{(0)})
   &\ge\|\wtd\scrH_1(Z_1^{(0)},Z_2^{(0)})\|_{\tr}-\tr\big([Z_1^{(0)}]^{\HH}\wtd\scrH_1(Z_1^{(0)},Z_2^{(0)})\big) \\
   &=\|\scrH_1(U^{(0)},V^{(0)})\|_{\tr}-\tr\big([U^{(0)}]^{\HH}\scrH_1(U^{(0)},V^{(0)})\big)=\eta_{1*}^{(j)},
\end{align*}
where we have used \Cref{lm:LOCGreduced}. Similarly, one NPDo SCF iterative step on
$\wtd \scrH_2(Z_1^{(0)},Z_2^{(0)})$ at the entry will
increase $\wtd f$ by
$$
\wtd f(Z_1^{(0)},Z_2^{(1)})-\wtd f(Z_1^{(0)},Z_2^{(0)})\ge \eta_{2*}^{(j)}.
$$
Therefore the assumption
\eqref{eq:SCF4accNPDo-GS:progress} merely asks \Cref{alg:NPDo-pjbSVD} to
solve each reduced problem~\eqref{eq:LOCGsub} about fractionally as good as the better one between the one NPDo SCF iterative step
on $\wtd \scrH_1(Z_1^{(0)},Z_2^{(0)})$ and on $\wtd \scrH_2(Z_1^{(0)},Z_2^{(0)})$, respectively.

\begin{theorem}\label{thm:cvg4SCF4accNPDo-GS:JSVD}
Let the sequence $\{(U^{(j)},V^{(j)})\}_{j=0}^{\infty}$ be generated by \Cref{alg:accNPDo-pjbSVD}
with Gauss-Seidel-type updating. Suppose that, at Line 5, \Cref{alg:NPDo-pjbSVD} delivers
the next approximate pair $(U^{(j+1)},V^{(j+1)})$ that satisfies \eqref{eq:SCF4accNPDo-GS:progress}.
Then the following statements hold.
\begin{enumerate}[{\rm (a)}]
  \item The sequence $\{f(U^{(j)},V^{(j)})\}_{j=0}^{\infty}$ is monotonically increasing and convergent;
  \item Let $(U_*,V_*)$ be an accumulation point
        of the sequence $\{(U^{(j)},V^{(j)})\}_{j=0}^{\infty}$, and let
        $\{(U^{(j)},V^{(j)})\}_{j\in\bbI}$ be a convergent subsequence that converges to $(U_*,V_*)$.
        We have \eqref{eq:pjbSVD:KKTatMAX}.
  \item We have four convergent series
        \begin{subequations}\label{eq:cvg4SCF4accNPDo-GS:JSVD:series}
        \begin{align}
        \sum_{j=0}^{\infty}\sigma_{\min}(\scrH_1^{(j)})\,
                         \big\|\sin\Theta\big(\cR(\scrH_1^{(j)}),\cR(U^{(j)})\big)\big\|_{\F}^2
                      &<\infty,    \label{eq:cvg4SCF4accNPDo-GS:JSVD:series-1} \\
        \sum_{j=0}^{\infty}\sigma_{\min}(\scrH_2^{(j)})\,
                         \big\|\sin\Theta\big(\cR(\scrH_2^{(j)}),\cR(V^{(j)})\big)\big\|_{\F}^2
                      &<\infty,    \label{eq:cvg4SCF4accNPDo-GS:JSVD:series-1a} \\
        \sum_{j=0}^{\infty}\sigma_{\min}(\scrH_1^{(j)})\,
                  \frac {\big\|\scrH_1^{(j)}-U^{(j)}\big([U^{(j)}]^{\HH}\scrH_1^{(j)}\big)\big\|_{\F}^2}
                        {\big\|\scrH_1^{(j)}\big\|_{\F}^2}
                      &<\infty,                 \label{eq:cvg4SCF4accNPDo-GS:JSVD:series-2} \\
        \sum_{j=0}^{\infty}\sigma_{\min}(\scrH_2^{(j)})\,
                  \frac {\big\|\scrH_2^{(j)}-V^{(j)}\big([V^{(j)}]^{\HH}\scrH_2^{(j)}\big)\big\|_{\F}^2}
                        {\big\|\scrH_2^{(j)}\big\|_{\F}^2}
                      &<\infty.                 \label{eq:cvg4SCF4accNPDo-GS:JSVD:series-2a}
        \end{align}
        \end{subequations}
\end{enumerate}
\end{theorem}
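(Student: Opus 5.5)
The plan follows the proof of \Cref{thm:cvg4SCF4NPDo-GS:JSVD}, with the progress assumption \eqref{eq:SCF4accNPDo-GS:progress} replacing the two-stage inequality \eqref{eq:cvg4SCF4NPDo-GS:JSVD:pf-3}.

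For item (a), note that $\eta_{*1}^{(j)}\ge0$ and $\eta_{*2}^{(j)}\ge0$ by \cite[Lemma~4.2]{li:2026}, since $\|X\|_{\tr}\ge\Re\tr(U^{\HH}X)$ for orthonormal $U$. Hence \eqref{eq:SCF4accNPDo-GS:progress} gives $f(U^{(j+1)},V^{(j+1)})\ge f(U^{(j)},V^{(j)})$. Moreover $f$ is bounded above on the compact set $\STM{k}{n_1}\times\STM{k}{n_2}$, for instance by $\sum_\ell\|B_\ell\|_{\F}^2$. So the sequence of objective values is monotone and convergent. Summing \eqref{eq:SCF4accNPDo-GS:progress} over $j$ also gives
\[
c\sum_{j}\max\{\eta_{*1}^{(j)},\eta_{*2}^{(j)}\}\le \lim_j f(U^{(j)},V^{(j)})-f(U^{(0)},V^{(0)})<\infty .
\]
In particular $\eta_{*1}^{(j)}\to0$ and $\eta_{*2}^{(j)}\to0$ along the whole sequence.

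For item (b), take the subsequence indexed by $\bbI$. The functions $\|\scrH_i(U,V)\|_{\tr}$, $\tr(U^{\HH}\scrH_1(U,V))$ and $\tr(V^{\HH}\scrH_2(U,V))$ are continuous, so passing to the limit gives
\[
\|\scrH_1(U_*,V_*)\|_{\tr}=\tr(U_*^{\HH}\scrH_1(U_*,V_*))
\quad\text{and}\quad
\|\scrH_2(U_*,V_*)\|_{\tr}=\tr(V_*^{\HH}\scrH_2(U_*,V_*)).
\]
(Alternatively, one can argue by contradiction with $\delta=\delta_1+\delta_2>0$ as in the earlier proof.) By the equality case in \cite[Lemma~4.2]{li:2026}, $\|X\|_{\tr}=\tr(U^{\HH}X)$ with $U$ orthonormal forces $\cR(X)\subseteq\cR(U)$ and $U^{\HH}X\succeq0$. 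That is exactly \eqref{eq:pjbSVD:KKTatMAX}, because $X=UU^{\HH}X=U\Lambda$ with $\Lambda=U^{\HH}X$. This is simpler than the earlier theorem: the progress inequality is stated at the same point $(U^{(j)},V^{(j)})$ for both blocks, so no $\what U_*$ appears and no full-rank hypothesis is needed.

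For item (c), apply \cite[Theorem~4.1]{li:2026} to $\scrH_1^{(j)}$ and $U^{(j)}$. Let $Q$ denote the orthonormal polar factor of $\scrH_1^{(j)}$, so that $\cR(Q)=\cR(\scrH_1^{(j)})$ when $\sigma_{\min}(\scrH_1^{(j)})>0$. The theorem then yields the chain
\[
\frac{\|\scrH_1^{(j)}-U^{(j)}([U^{(j)}]^{\HH}\scrH_1^{(j)})\|_{\F}^2}{\|\scrH_1^{(j)}\|_{\F}^2}
\le\|\sin\Theta(\cR(\scrH_1^{(j)}),\cR(U^{(j)}))\|_{\F}^2
\le\frac{2\eta_{*1}^{(j)}}{\sigma_{\min}(\scrH_1^{(j)})},
\]
and analogously for $\scrH_2^{(j)}$ and $V^{(j)}$. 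When $\sigma_{\min}=0$ the corresponding terms in the series vanish, so the multiplied form
\[
\sigma_{\min}(\scrH_1^{(j)})\,\|\sin\Theta\|_{\F}^2\le 2\eta_{*1}^{(j)}
\]
holds in every case. Summing and using $\eta_{*i}^{(j)}\le\max\{\eta_{*1}^{(j)},\eta_{*2}^{(j)}\}$ together with the summability from (a) gives all four series in \eqref{eq:cvg4SCF4accNPDo-GS:JSVD:series}.

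The step I expect to be the main obstacle is matching the exact form of \cite[Theorem~4.1]{li:2026}. In the earlier theorem it was stated with the angle to $\cR(U^{(j+1)})$, the polar factor; here it must be phrased with $\cR(\scrH_1^{(j)})$. I also need to check that the rank-deficient case is handled by the multiplicative form of the bound rather than by division.
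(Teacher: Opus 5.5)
Your proposal is correct and follows the paper's proof essentially step for step. Monotonicity and summability come from \eqref{eq:SCF4accNPDo-GS:progress}. Item (b) uses the equality case of \cite[Lemma~4.2]{li:2026}, and item (c) uses \cite[Theorem~4.1]{li:2026} with $\cR(\scrH_1^{(j)})$, the range of the polar factor, replacing $\cR(U^{(j+1)})$; rank-deficient terms vanish via the $\sigma_{\min}$ factor. The one small deviation is in (b): you pass to the limit using $\eta_{*i}^{(j)}\to0$ from summability, rather than the paper's contradiction argument with $\delta=\max\{\delta_1,\delta_2\}$. Both are valid, and yours is slightly more direct.
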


\begin{proof}
Item (a) is an easy consequence of the assumption
\eqref{eq:SCF4accNPDo-GS:progress}. Some minor modifications to the proof of \Cref{thm:cvg4SCF4NPDo-GS:JSVD}(b)
work, too, to prove item (b) here, e.g., use $\delta=\max\{\delta_1,\delta_2\}$ with $\delta_i$ as in \eqref{eq:cvg4SCF4NPDo-GS:JSVD:pf-deltas}.
Some minor modifications to the proof of \Cref{thm:cvg4SCF4NPDo-GS:JSVD}(e) will show item (c) here, noticing
the major difference between \eqref{eq:cvg4SCF4NPDo-GS:JSVD:series-1} and \eqref{eq:cvg4SCF4accNPDo-GS:JSVD:series-1}
and that between \eqref{eq:cvg4SCF4NPDo-GS:JSVD:series-1a} and \eqref{eq:cvg4SCF4accNPDo-GS:JSVD:series-1a}:
$\cR(U^{(j+1)})$ {\em vs.} $\cR(\scrH_1^{(j)})$ and
$\cR(V^{(j+1)})$ {\em vs.} $\cR(\scrH_2^{(j)})$.
Also the hat to $\scrH_2^{(j)}$ is absent.
\end{proof}

\subsection{With Jacobi-type updating}
With Jacobi-type updating, we can no longer claim that the sequence $\{f(U^{(j)},V^{(j)})\}_{i=0}^{\infty}$ is monotonically increasing.
To overcome the difficulty  brought by that, we will consider the sequence of   pairs of approximate maximizers
that include all intermediate pairs $(U^{(j+1)},V^{(j)})$ and $(U^{(j)},V^{(j+1)})$, besides
$(U^{(j)},V^{(j)})$. For ease of presentation, we introduce the sequence
$\{P_j\}_{j=0}^{\infty}$ defined by
\begin{equation}\label{eq:SCF4NPDo-Jac:JSVD:all}
P_{3j}=(U^{(j)},V^{(j)}), \,\,
P_{3j+1}=(U^{(j+1)},V^{(j)}), \,\,
P_{3j+2}=(U^{(j)},V^{(j+1)})\quad\mbox{for $j=0,1,2,\ldots$}.
\end{equation}

\begin{theorem}\label{thm:cvg4SCF4NPDo-Jac:JSVD}
Let the sequence $\{P_j\}_{j=0}^{\infty}$ defined by \eqref{eq:SCF4NPDo-Jac:JSVD:all}  be generated by \Cref{alg:NPDo-pjbSVD}
with Jacobi-type updating.
The following statements hold.
\begin{enumerate}[{\rm (a)}]
  \item There exists a convergent subsequence $\{P_j\}_{j\in\bbI}$ of $\{P_j\}_{j=0}^{\infty}$,
        say convergent to $P_*\equiv(U_*,V_*)$,  such that
        \begin{equation}\label{eq:SCF4NPDo-Jac:JSVD:objv}
        \limsup_{j\to\infty}f(P_j)=\lim_{\bbI\ni j\to\infty}f(P_j)=f(U_*,V_*);
        \end{equation}
  \item
        If
        $\{P_j\}_{j\in\bbI}$ in item {\rm (a)} contains infinitely many pairs of type $(U^{(j)},V^{(j)})$, then
        \eqref{eq:pjbSVD:KKTatMAX} holds.
\end{enumerate}
\end{theorem}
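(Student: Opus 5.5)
For item (a) I will use compactness plus the relations given by \Cref{thm:f(UV)-Ansatz}. The pairs $P_j$ all lie in the compact set $\STM{k}{n_1}\times\STM{k}{n_2}$, and $f$ is continuous there. So $\{f(P_j)\}$ is bounded and $\limsup_{j\to\infty}f(P_j)=:f^*$ is finite. I pick a subsequence along which $f(P_j)\to f^*$. By compactness I then extract a further subsequence $\{P_j\}_{j\in\bbI}$ converging to some $P_*=(U_*,V_*)$. Continuity gives $f(U_*,V_*)=\lim_{\bbI\ni j\to\infty}f(P_j)=f^*$, which is \eqref{eq:SCF4NPDo-Jac:JSVD:objv}. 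Item (a) is just this.

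For item (b) I refine $\bbI$ to the infinitely many indices of type $3j$. That is, I take $\bbJ$ infinite with $(U^{(j)},V^{(j)})\to(U_*,V_*)$ as $\bbJ\ni j\to\infty$, and $f(U^{(j)},V^{(j)})\to f^*$. The key observation is that the Jacobi step uses the same base pair for both updates. Applying \Cref{thm:f(UV)-Ansatz}(a) with $\what U=U^{(j+1)}$ gives
$$f(U^{(j+1)},V^{(j)})\ge f(U^{(j)},V^{(j)})+\|\scrH_1^{(j)}\|_{\tr}-\tr([U^{(j)}]^{\HH}\scrH_1^{(j)}).$$
Applying \Cref{thm:f(UV)-Ansatz}(b) with $\what V=V^{(j+1)}$ gives
$$f(U^{(j)},V^{(j+1)})\ge f(U^{(j)},V^{(j)})+\|\scrH_2^{(j)}\|_{\tr}-\tr([V^{(j)}]^{\HH}\scrH_2^{(j)}).$$
Both increments are nonnegative by \cite[Lemma~4.2]{li:2026}. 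The two left-hand sides are $f(P_{3j+1})$ and $f(P_{3j+2})$, which are members of the whole sequence, so their $\limsup$ is at most $f^*$.

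Now I argue by contradiction, as in the proof of \Cref{thm:cvg4SCF4NPDo-GS:JSVD}(b). Suppose \eqref{eq:pjbSVD:KKTatMAX} fails. Then by \cite[Lemma~4.2]{li:2026}, $\delta_1:=\|\scrH_1(U_*,V_*)\|_{\tr}-\tr(U_*^{\HH}\scrH_1(U_*,V_*))>0$ or the analogous $\delta_2>0$. Say $\delta_1>0$. By continuity, for $j\in\bbJ$ large the $U$-increment exceeds $\delta_1/2$ and $f(U^{(j)},V^{(j)})>f^*-\delta_1/4$. This gives $f(P_{3j+1})>f^*+\delta_1/4$ for infinitely many $j$, contradicting $\limsup f(P_j)=f^*$. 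The case $\delta_2>0$ is handled the same way using $P_{3j+2}$. Hence $\scrH_i$ at $(U_*,V_*)$ are polar decompositions with orthonormal factors $U_*,V_*$, which is \eqref{eq:pjbSVD:KKTatMAX}.

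The main obstacle is the loss of monotonicity under Jacobi updating. The Gauss--Seidel proof compared against $\lim f=f(U_*,V_*)$, and that is unavailable here. The replacement is choosing $\bbI$ to realize the $\limsup$, so that $f^*$ bounds every $P_j$, intermediate pairs included. This is why the intermediate pairs are put into the sequence. It is also why item (b) needs infinitely many base-type pairs: only at those indices are both one-sided Ansatz inequalities anchored at the limit point.
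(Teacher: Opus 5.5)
Your proposal is correct and follows essentially the same route as the paper. You realize the $\limsup$ by a compactness-extracted subsequence, then use the two one-sided Ansatz inequalities anchored at the base pairs $(U^{(j)},V^{(j)})$ in that subsequence, with a continuity argument by contradiction against the $\limsup$. The only cosmetic differences are that you treat $\delta_1>0$ and $\delta_2>0$ in one sweep with margins $\delta_1/2,\ \delta_1/4$, whereas the paper does the $U$-part with margins $\delta/3,\ \delta/6$ and declares the $V$-part analogous.
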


\begin{proof}
Since objective function $f$ is bounded above, the limit superior of $\{f(P_j)\}_{j=1}^{\infty}$ exists and
there exists a convergent subsequence $\{f(P_j)\}_{j=\bbI'}$ of $\{f(P_j)\}_{j=1}^{\infty}$ such that
$$
\limsup_{j\to\infty}f(P_j)=\lim_{\bbI'\ni j\to\infty}f(P_j).
$$
Since $\{P_j\}_{j\in\bbI'}\in\STM{k}{n_1}\times\STM{k}{n_2}$, it too has a convergent subsequence, say
$\{P_j\}_{j\in\bbI}$, which satisfies \eqref{eq:SCF4NPDo-Jac:JSVD:objv}. This proves item (a).

For item (b), we will prove \eqref{eq:pjbSVD:KKTatMAX-1} only because essentially the same argument
based on \eqref{eq:cvg4SCF4NPDo-Jac:JSVD:pf-3b} below
will lead to \eqref{eq:pjbSVD:KKTatMAX-2}. With Jacobi-type updating, we will have
\begin{subequations}\label{eq:cvg4SCF4NPDo-Jac:JSVD:pf-3}
\begin{align}
f(U^{(j+1)},V^{(j)})&\ge f(U^{(j)},V^{(j)})
       +\Big[\|\scrH_1^{(j)}\|_{\tr}-\tr\big([U^{(j)}]^{\HH}\scrH_1^{(j)}\big)\Big], \label{eq:cvg4SCF4NPDo-Jac:JSVD:pf-3a}\\
f(U^{(j)},V^{(j+1)})&\ge f(U^{(j)},V^{(j)})
       +\Big[\|\scrH_2^{(j)}\|_{\tr}-\tr\big([V^{(j)}]^{\HH}\scrH_2^{(j)}\big)\Big]. \label{eq:cvg4SCF4NPDo-Jac:JSVD:pf-3b}
\end{align}
\end{subequations}
Suppose
that \eqref{eq:pjbSVD:KKTatMAX-1} did not hold. Then, by \cite[Lemma~4.2]{li:2026},
$$
\delta:=\|\scrH_1(U_*,V_*)\|_{\tr}-\tr\big(U_*^{\HH}\scrH_1(U_*,V_*)\big)>0.
$$
Since $\|\scrH_1(U,V)\|_{\tr}$, $\tr(U^{\HH}\scrH_1(U,V))$, and $f(U,V)$ are continuous
in $(U,V)\in\bbC^{n_1\times k}\times\bbC^{n_2\times k}$, we have
for infinitely many and sufficiently large $j\in\bbI$ such that
$(U^{(j)},V^{(j)})$ is contained in $\{P_j\}_{j\in\bbI}$ and
such that
\begin{subequations}\label{eq:cvg4SCF4NPDo-Jac:JSVD:pf-5}
\begin{gather}
\Big|\|\scrH_1(U_*,V_*)\|_{\tr}-\|\scrH_1^{(j)}\|_{\tr}\Big| <\delta/3, \label{eq:thm:cvg4SCF4NPDo-Jac-subs-1:pf-5a}\\
\Big|\tr((U^{(j)})^{\HH}\scrH_1^{(j)})-\tr(U_*^{\HH}\scrH_1(U_*,V_*))\Big|<\delta/3, \label{eq:thm:cvg4SCF4NPDo-Jac-subs-1:pf-5b}\\
f(U_*,V_*)-\delta/6<f(U^{(j)},V^{(j)})\le f(U_*,V_*).  \label{eq:thm:cvg4SCF4NPDo-Jac-subs-1:pf-5ec}
\end{gather}
\end{subequations}
By \eqref{eq:cvg4SCF4NPDo-Jac:JSVD:pf-3a}, we have
\begin{align*}
f(U^{(j+1)},V^{(j)})&\ge f(U^{(j)},V^{(j)})
       +\Big[\|\scrH_1^{(j)}\|_{\tr}-\tr\big([V^{(j)}]^{\HH}\scrH_1^{(j)}\big)\Big]  \\
  &>f(U_*,V_*)-\frac {\delta}{6}
        +\Big[\|\scrH_1(U_*,V_*)\|_{\tr}-\frac {\delta}3-\tr(U_*^{\HH}\scrH_1(U_*,V_*))-\frac {\delta}3\Big] \\
  &=f(U_*,V_*)+\frac {\delta}{6}>f(U_*,V_*),
\end{align*}
which holds for infinitely many $j$ and thus
$$
\limsup_{j\to\infty}f(P_j)\ge f(U_*,V_*)+\frac {\delta}{6}>f(U_*,V_*),
$$
contradicting \eqref{eq:SCF4NPDo-Jac:JSVD:objv}. Hence \eqref{eq:pjbSVD:KKTatMAX-1} holds.
\end{proof}

\subsection{Gauss-Seidel-type {\em vs.} Jacobi-type in convergence}
There are notable differences between \Cref{thm:cvg4SCF4NPDo-GS:JSVD,thm:cvg4SCF4NPDo-Jac:JSVD}:
\begin{enumerate}
  \item With Gauss-Seidel-type updating in \Cref{thm:cvg4SCF4NPDo-GS:JSVD}, sequence $\{f(U^{(j)},V^{(j)})\}_{j=0}^{\infty}$ is
        monotonically increasing and thus convergent. However,
        with Jacobi-type updating in \Cref{thm:cvg4SCF4NPDo-Jac:JSVD},
        whether $\{f(U^{(j)},V^{(j)})\}_{j=0}^{\infty}$ is
        monotonically increasing is questionable. To circumvent that,
        we take objective values at, besides $(U^{(j)},V^{(j)})$,  all intermediate pairs of approximate maximizers
        into consideration and focus on the limit superior of  objective values at those pairs.
  \item With Gauss-Seidel-type updating in \Cref{thm:cvg4SCF4NPDo-GS:JSVD}, we do not have \eqref{eq:pjbSVD:KKTatMAX-2} but \eqref{eq:pjbSVD:KKTatMAX'-GS-2}
        instead. Only upon assuming $\rank(\scrH_1(U_*,V_*))=k$, we can claim \eqref{eq:pjbSVD:KKTatMAX} in whole.
        However, with Jacobi-type updating in \Cref{thm:cvg4SCF4NPDo-Jac:JSVD} which take all
        pairs of approximate maximizers into consideration, we get \eqref{eq:pjbSVD:KKTatMAX} without any condition on $\rank(\scrH_1(U_*,V_*))$, but note that there is a condition imposed on the subsequence
        in \Cref{thm:cvg4SCF4NPDo-Jac:JSVD}(b) that gives the
        limit superior there.
  \item With Gauss-Seidel-type updating in \Cref{thm:cvg4SCF4NPDo-GS:JSVD}, we have four informative convergent series,
        which implies \eqref{eq:NPDo-always}, provided $\sigma_{\min}(\scrH_1^{(j)})$ and $\sigma_{\min}(\what\scrH_2^{(j)})$
        are eventually bounded below away from $0$ uniformly.
        However, with Jacobi-type updating in \Cref{thm:cvg4SCF4NPDo-Jac:JSVD}, we have not found one yet.
\end{enumerate}

\subsection{Extensions}
It is interesting to observe that in proving theorems in this section the actual forms of
both $\scrH_i(\cdot,\cdot)$ play no roles at all. What have been used in the proofs are:
\begin{enumerate}[i)]
  \item $\scrH_i(\cdot,\cdot)$ for $i=1,2$ have the properties stated in \Cref{thm:f(UV)-Ansatz}(a,b), and
  \item $\scrH_i(\cdot,\cdot)$ for $i=1,2$ and $f(U,V)$ are continuous.
\end{enumerate}
Hence both \Cref{thm:cvg4SCF4NPDo-GS:JSVD,thm:cvg4SCF4accNPDo-GS:JSVD,thm:cvg4SCF4NPDo-Jac:JSVD} remain valid for solving
\begin{equation}\label{eq:opt-on-STM2}
\max_{U\in\STM{k}{n_1},\,V\in\STM{k}{n_2}}  f(U,V)
\end{equation}
with a general objective $f$ that possesses the two properties listed above where
$$
\scrH_1(U,V):=\frac {\partial f(U,V)}{\partial U}, \quad
\scrH_2(U,V):=\frac {\partial f(U,V)}{\partial V}.
$$
With that in mind, we point out that it is straightforward to extend all, including the algorithm and convergence analysis, for solving an even more general
\begin{equation}\label{eq:opt-on-STMm}
\max_{U_i\in\STM{k}{n_i}, i=1,2,\ldots,m}  f(U_1,U_2,\ldots,U_m).
\end{equation}
Detail is omitted.

\section{Numerical Demonstration on \pjbsvd}\label{sec:egs-jbSVD}
To demonstrate the effectiveness of \Cref{alg:NPDo-pjbSVD,alg:accNPDo-pjbSVD} on  principal  Joint SVD-type block diagonalization
(\pjbsvd), we will conduct two experiments.
In both experiments, we first generate
two random orthogonal/unitary matrices $Q_i$, in MATLAB, by
\begin{subequations}\label{eq:aJSVD}
\begin{align}
&Q_1={\tt orth}({\tt randn}(n_1))
\quad\mbox{or}\quad
{\tt orth}({\tt randn}(n_1)+{\tt 1i*}{\tt randn}(n_1)), \label{eq:aJSVD-1} \\
&Q_2={\tt orth}({\tt randn}(n_2))
\quad\mbox{or}\quad
{\tt orth}({\tt randn}(n_2)+{\tt 1i*}{\tt randn}(n_2)), \label{eq:aJSVD-2}
\end{align}
depending on the choice of {\em real\/} or {\em complex\/}  matrices, and then loop over $\ell$ from $1$ to $N$ to execute
\begin{align}
C_{\ell}&={\tt randn}(n_1,n_2)
\quad\mbox{or}\quad
{\tt randn}(n_1,n_2)+{\tt 1i*}{\tt randn}(n_1,n_2), \label{eq:aJSVD-2a} \\
\mbox{either}\quad
D_{\ell}&=10\cdot\begin{bmatrix}
{\tt diag}({\tt randn}(n_2,1)) \\
           0
         \end{bmatrix} \quad\mbox{for \pjsvd\ or}\quad \label{eq:aJSVD-2b} \\
D_{\ell}&=10\cdot\begin{bmatrix}
{\tt diag}({\tt randn}(k_1),\ldots,{\tt randn}(k_t)) \\
           0
         \end{bmatrix}\quad\mbox{for \pjbsvd}, \label{eq:aJSVD-2c} \\
\intertext{and finally,}
B_{\ell}&=Q_1^{\HH}D_{\ell}Q_2+\eta C_{\ell}, \label{eq:aJSVD-2d}
\end{align}
where $\eta$ is an adjustable parameter, e.g., $10^{0}\sim 10^{-3}$, and $n_1=1.1n_2$.
All experiments are conducted within the MATLAB environment (MATLAB R2022) on a Dell Precision 3660 desktop with an
Intel i9 processor (3200 Mhz), 32 GB memory, running Microsoft Windows 11 Enterprise.
\end{subequations}
We will compare \Cref{alg:NPDo-pjbSVD} and its LOCG-accelerated version, named NPDo and accNPDo
along with suffixes ``-GS'' and ``-Jac'' to indicate their respective combinations with
either Gauss-Seidel-type updating or Jacobi-type updating. In total, for each set $\{B_{\ell}\}_{\ell=1}^N$,
four methods: NPDo-GS, NPDo-Jac, accNPDo-GS and accNPDo-Jac,
are used to compute its \pjbsvd.

\begin{figure}[th]
{\centering
\begin{tabular}{lcc}
  & \small NPDo-GS & \small NPDo-Jac \\
\rotatebox{90}{\hspace*{1.8cm}real} &
\resizebox*{0.41\textwidth}{0.18\textheight}{\includegraphics{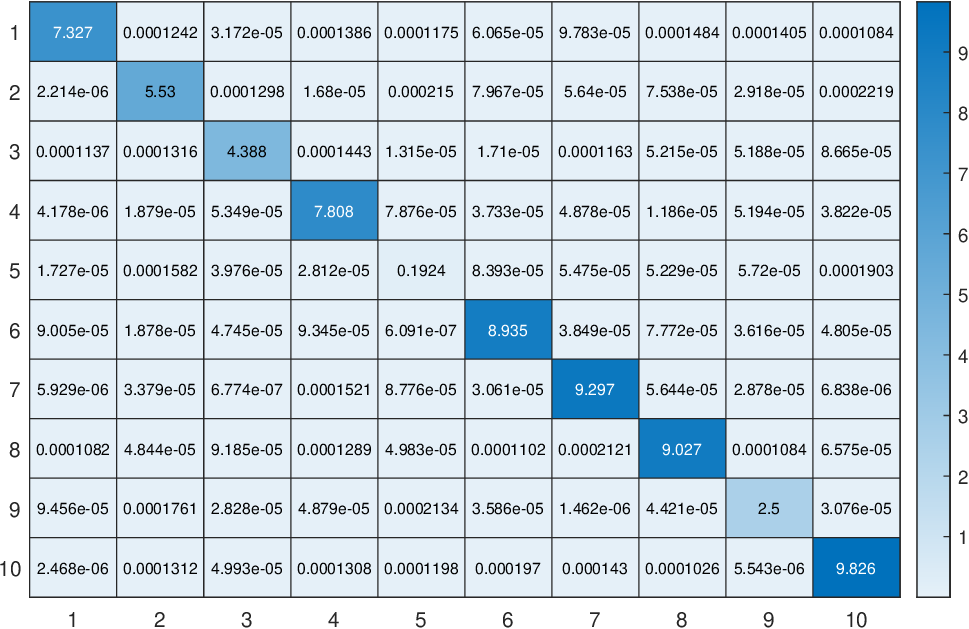}}
  & \resizebox*{0.41\textwidth}{0.18\textheight}{\includegraphics{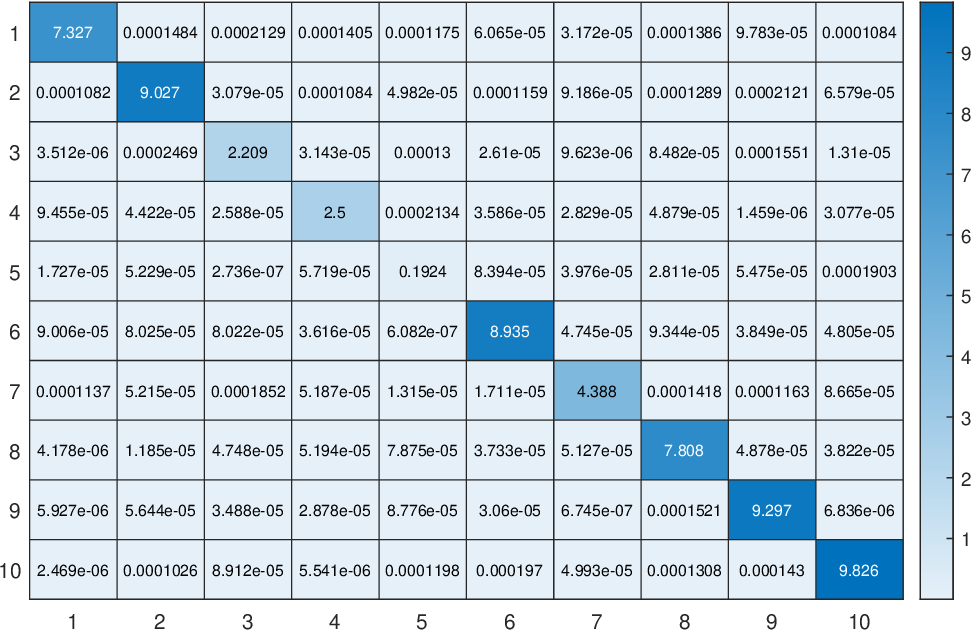}} \\
\rotatebox{90}{\hspace*{1.4cm}complex} &
\resizebox*{0.41\textwidth}{0.18\textheight}{\includegraphics{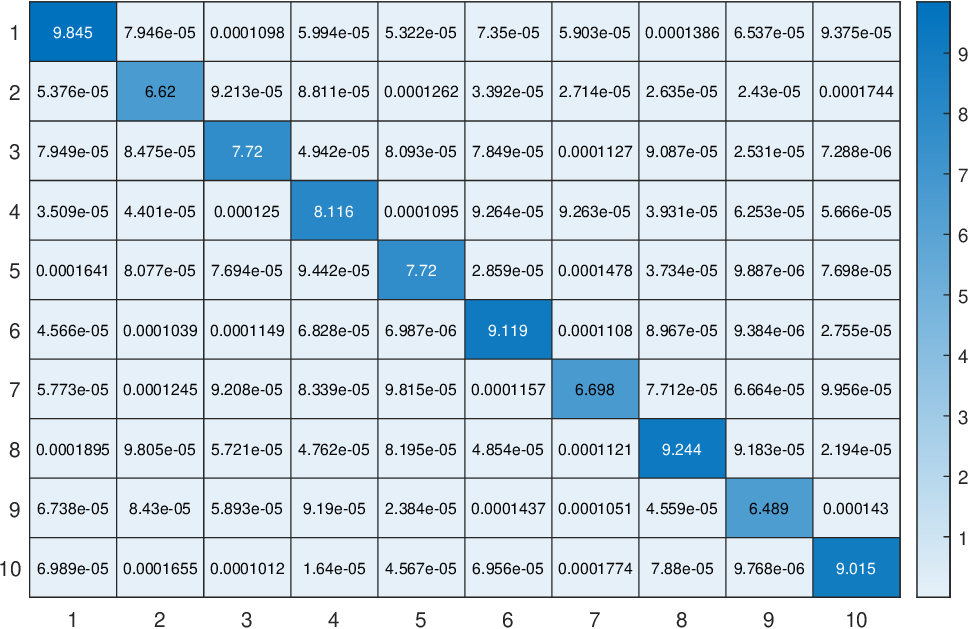}}
  & \resizebox*{0.41\textwidth}{0.18\textheight}{\includegraphics{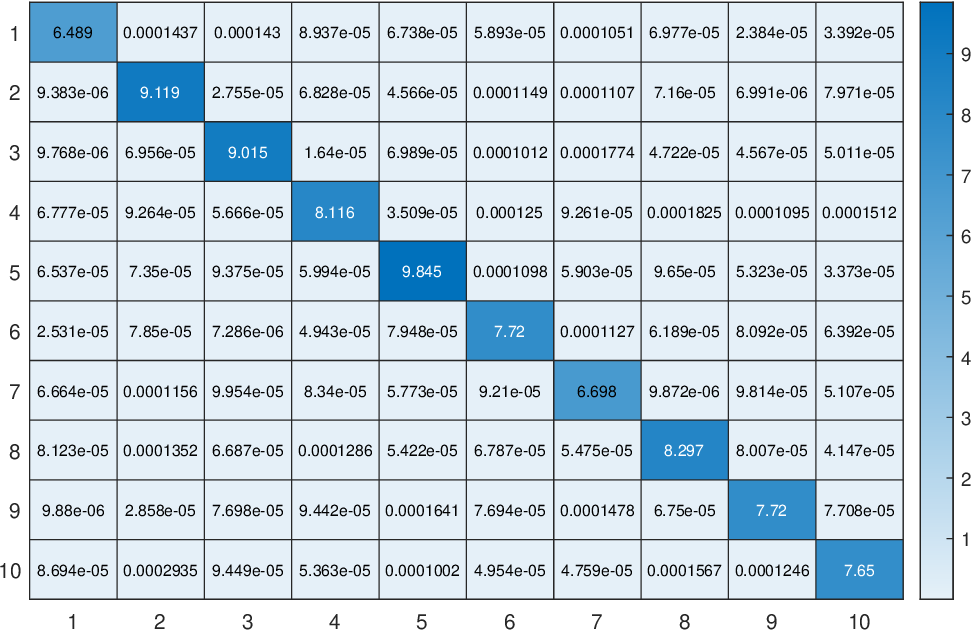}}
\end{tabular}\par
}
\vspace{-0.15 cm}
\caption{\small
    Heatmaps of the leading $10$-by-$10$ principal submatrices of converged $U^{\HH}B_1V$
    for two randomly generated sets $\{B_{\ell}\}_{\ell=1}^{10}$ according to \eqref{eq:aJSVD} with $\eta=10^{-4}$, by NPDo.
  }
\label{fig:heatmaps}
\end{figure}

Our first experiment is to use the heatmap to visually  demonstrate that the NPDo approach solves \eqref{eq:pbj-intro} to yield
\pjsvd. Let $n_2=500$ and $n_1=1.1n_2=550$, $N=10$, $k=10$, according to \eqref{eq:aJSVD} with $\eta=10^{-4}$, and generate two random sets $\{B_{\ell}\}_{\ell=1}^N$, real  or  complex. \Cref{fig:heatmaps} plots the heatmaps of $U^{\HH}B_1V$ by \Cref{alg:NPDo-pjbSVD}. It clearly shows that the ``mass'' of $B_1$ is moved principally to the diagonal of $U^{\HH}B_1V$
by both NPDo-GS and NPDo-Jac. We also examined the other $U^{\HH}B_{\ell}V$ and those by accNPDo and witnessed the same thing: large diagonal entries and very small off-diagonal entries.

\begin{figure}[t]
{\centering
\begin{tabular}{lcccc}
\rotatebox{90}{\hspace*{1.2cm}$\eta=10^0$} &
\resizebox*{0.21\textwidth}{0.14\textheight}{\includegraphics{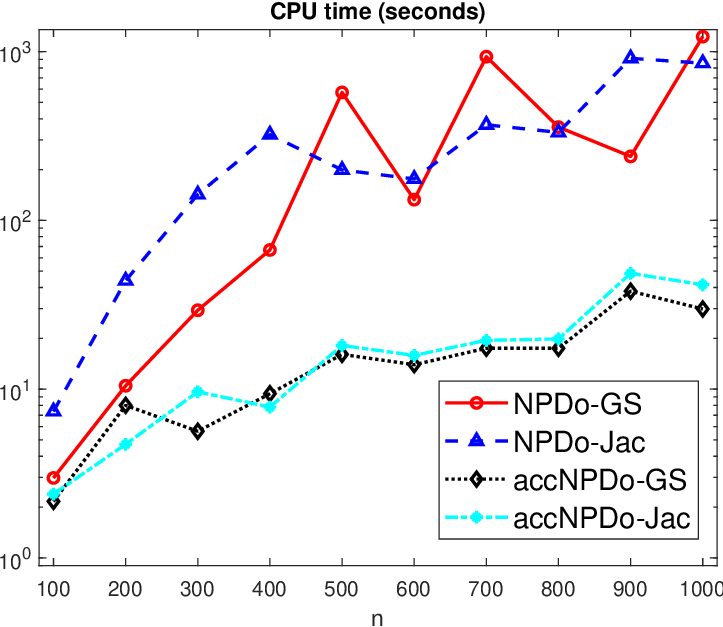}}
  & \resizebox*{0.21\textwidth}{0.14\textheight}{\includegraphics{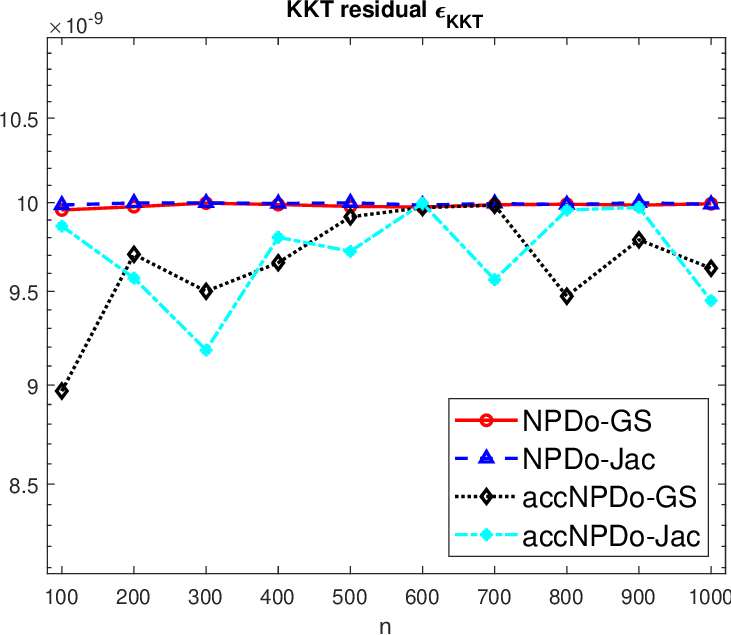}}
  & \resizebox*{0.21\textwidth}{0.14\textheight}{\includegraphics{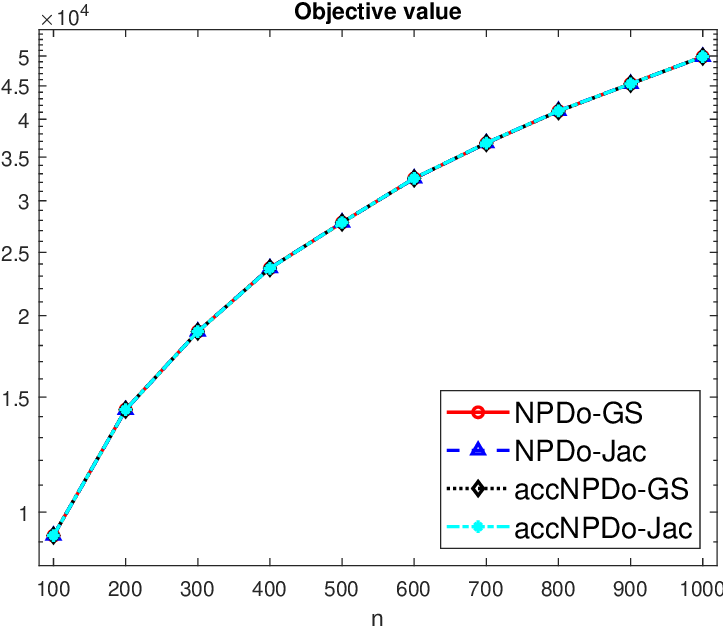}}
  & \resizebox*{0.21\textwidth}{0.14\textheight}{\includegraphics{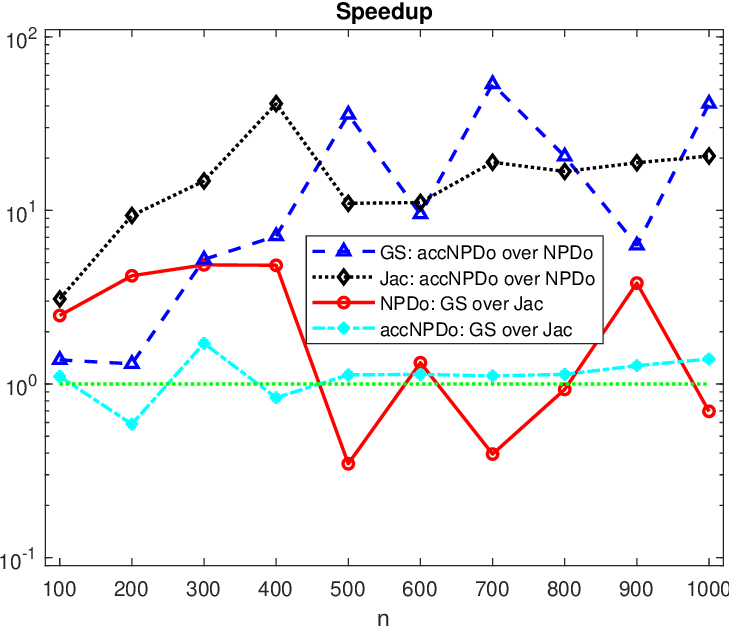}} \\
\rotatebox{90}{\hspace*{1.2cm}$\eta=10^{-1}$} &
\resizebox*{0.21\textwidth}{0.14\textheight}{\includegraphics{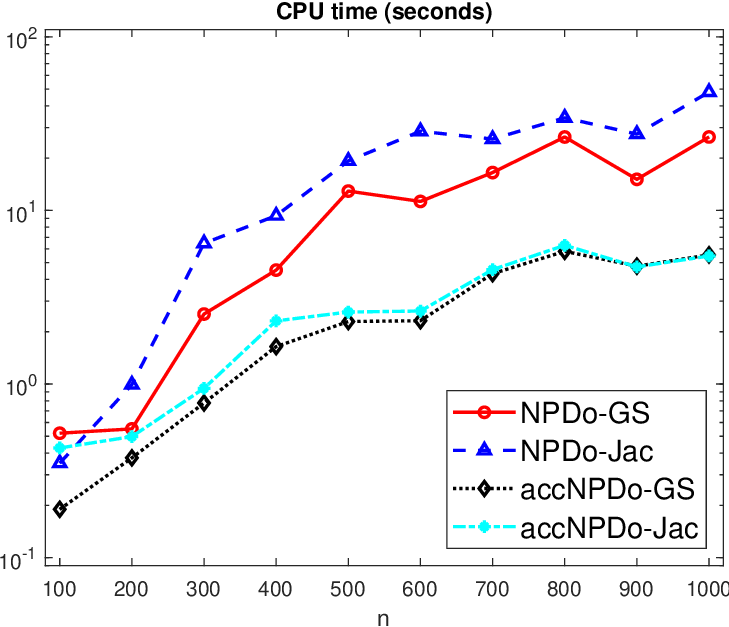}}
  & \resizebox*{0.21\textwidth}{0.14\textheight}{\includegraphics{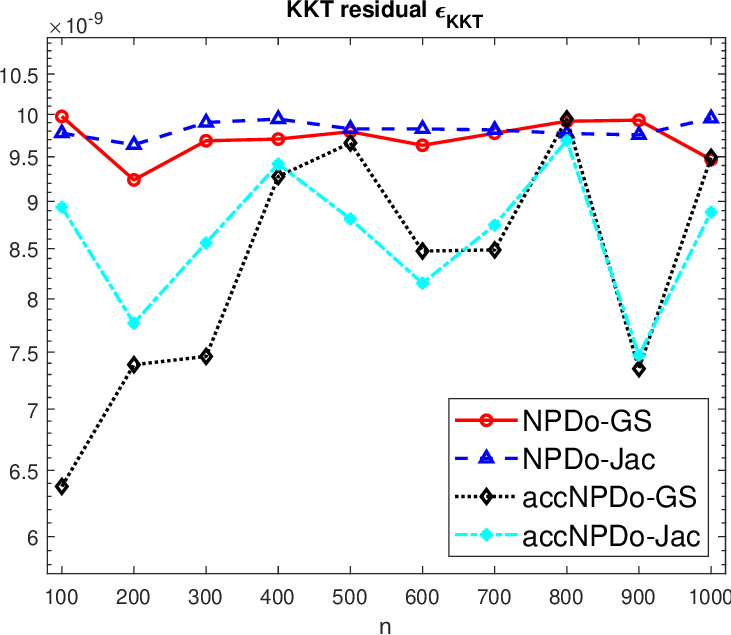}}
  & \resizebox*{0.21\textwidth}{0.14\textheight}{\includegraphics{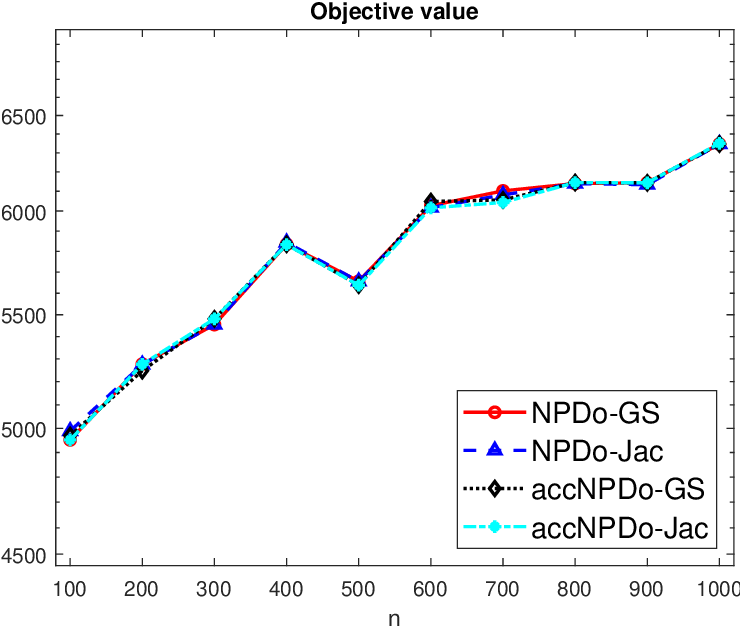}}
  & \resizebox*{0.21\textwidth}{0.14\textheight}{\includegraphics{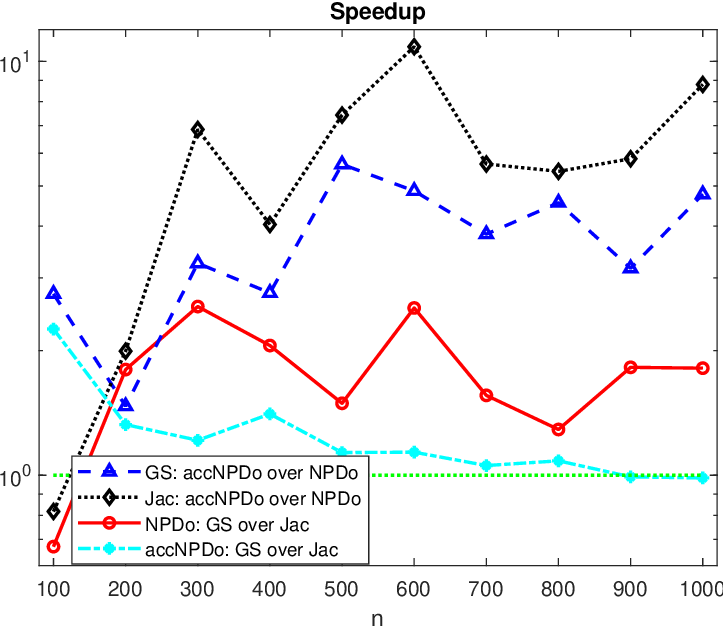}} \\
\rotatebox{90}{\hspace*{1.2cm}$\eta=10^{-2}$} &
\resizebox*{0.21\textwidth}{0.14\textheight}{\includegraphics{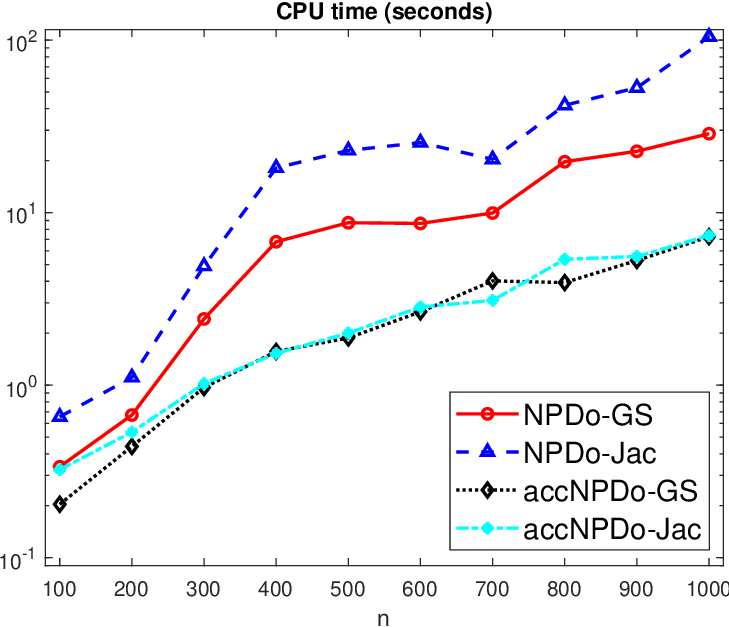}}
  & \resizebox*{0.21\textwidth}{0.14\textheight}{\includegraphics{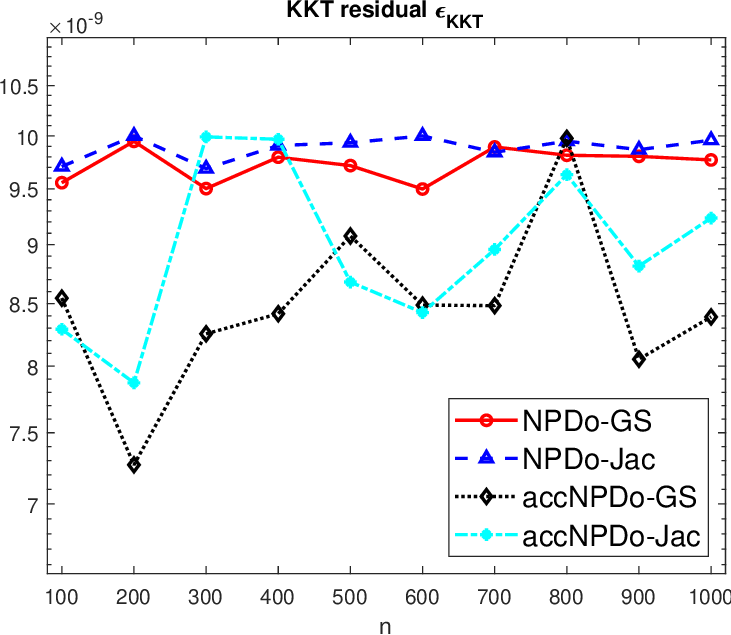}}
  & \resizebox*{0.21\textwidth}{0.14\textheight}{\includegraphics{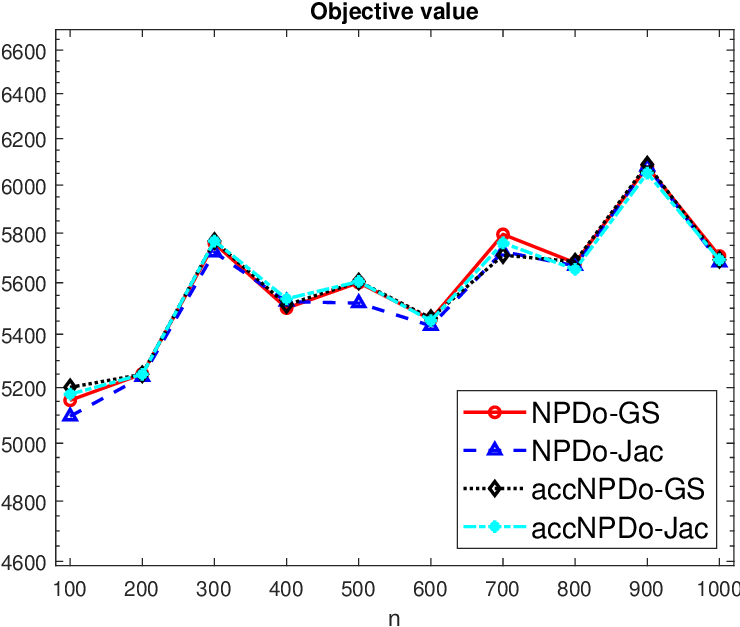}}
  & \resizebox*{0.21\textwidth}{0.14\textheight}{\includegraphics{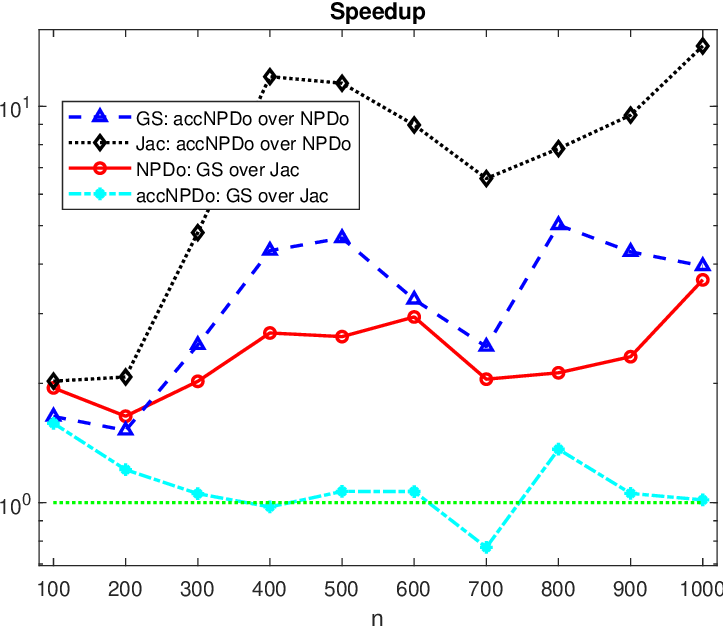}}  \\
\rotatebox{90}{\hspace*{1.2cm}$\eta=10^{-3}$} &
\resizebox*{0.21\textwidth}{0.14\textheight}{\includegraphics{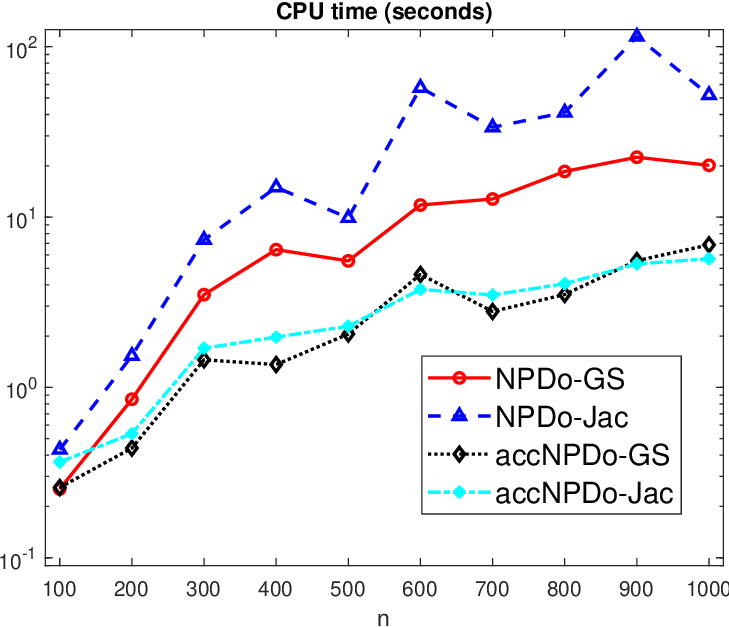}}
  & \resizebox*{0.21\textwidth}{0.14\textheight}{\includegraphics{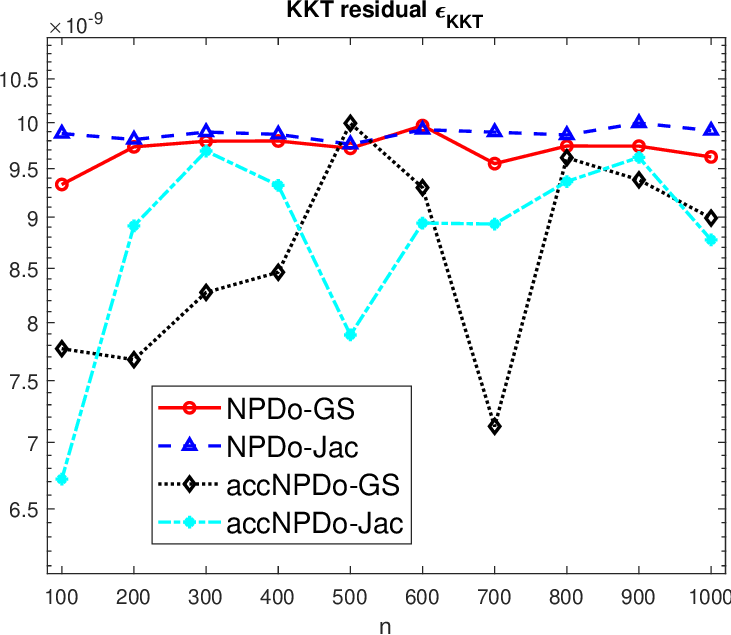}}
  & \resizebox*{0.21\textwidth}{0.14\textheight}{\includegraphics{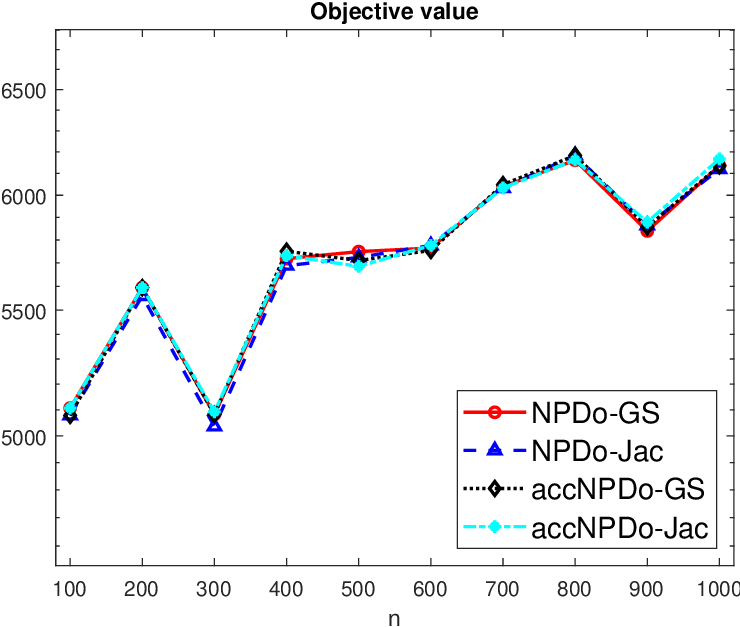}}
  & \resizebox*{0.21\textwidth}{0.14\textheight}{\includegraphics{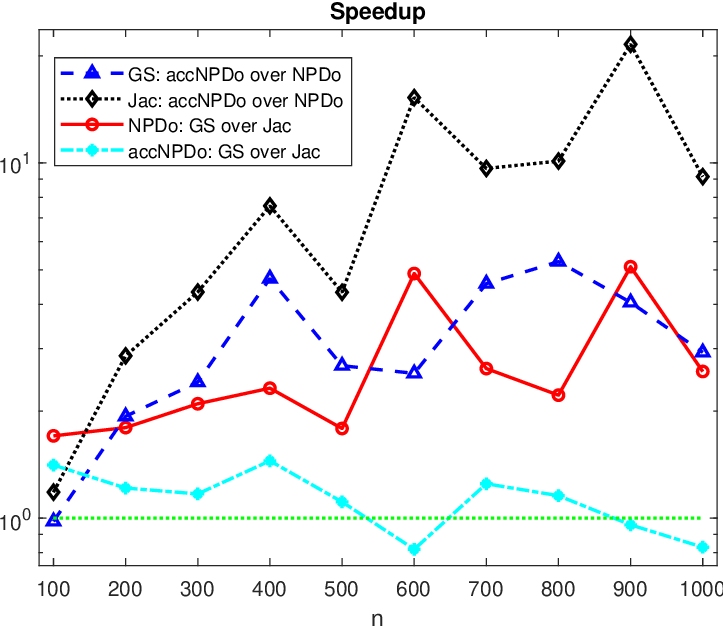}}
\end{tabular}\par
}
\vspace{-0.15 cm}
\caption{\small
   Performance for \pjsvd\ by NPDo and accNPDo combined with either Gauss-Seidel-type and Jacobi-type updating
   on {\em complex\/} $B_{\ell}$ generated according to
   \eqref{eq:aJSVD}, where $k=10$, $N=10$, and $n_2$ varies from $10^2$ to $10^3$ and $n_1=1.1n_2$.
  }
\label{fig:aJSVD-n-vary}
\end{figure}

Next in \Cref{fig:aJSVD-n-vary,fig:aJBSVD-n-vary}, we plot
CPU time, KKT residual $\epsilon_{\KKT}$, objective value, and speedup as $n_2$ varies for $10^2$ to $10^3$ while
$\eta$ varies from $10^0$ to $10^{-3}$. We have the following observations:
\begin{enumerate}[(1)]
  \item KKT residuals $\epsilon_{\KKT}$ are no bigger than $10^{-8}$, indicating convergence, while the ones by accNPDo with both GS and Jac
        are overwhelmingly the smallest, most of the time.
  \item The objective values are comparable but not the same for all, indicating convergence to different
        stationary points.
  \item For the same updating scheme: GS or Jac, accNPDo can be several times faster than NPDo,
        except possibly for small $n_2=100$ and $200$. The speed gain is more pronounced for \jsvd\ than for \jbsvd.
  \item For \jsvd, NPDo-GS can be several times faster than NPDo-Jac, but for \jbsvd, it does not appear that one has advantage
        over the other.
  \item Despite less satisfactory convergence results in \Cref{thm:cvg4SCF4NPDo-Jac:JSVD} for NPDo-Jac
        than those in \Cref{thm:cvg4SCF4NPDo-GS:JSVD} for NPDo-GS, Alternating SCF (ASCF) with Jacobi-type updating works surprisingly well.
\end{enumerate}

\begin{figure}[t]
{\centering
\begin{tabular}{lcccc}
\rotatebox{90}{\hspace*{1.2cm}$\eta=10^0$} &
\resizebox*{0.21\textwidth}{0.14\textheight}{\includegraphics{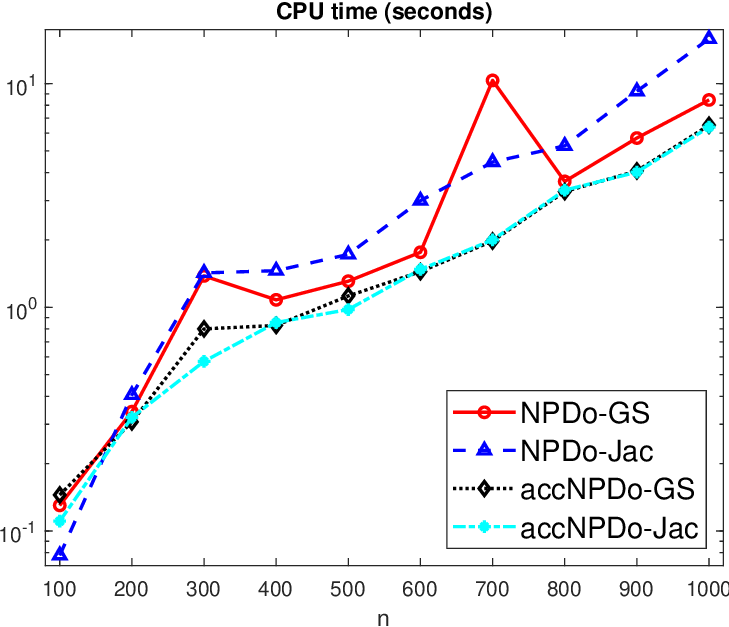}}
  & \resizebox*{0.21\textwidth}{0.14\textheight}{\includegraphics{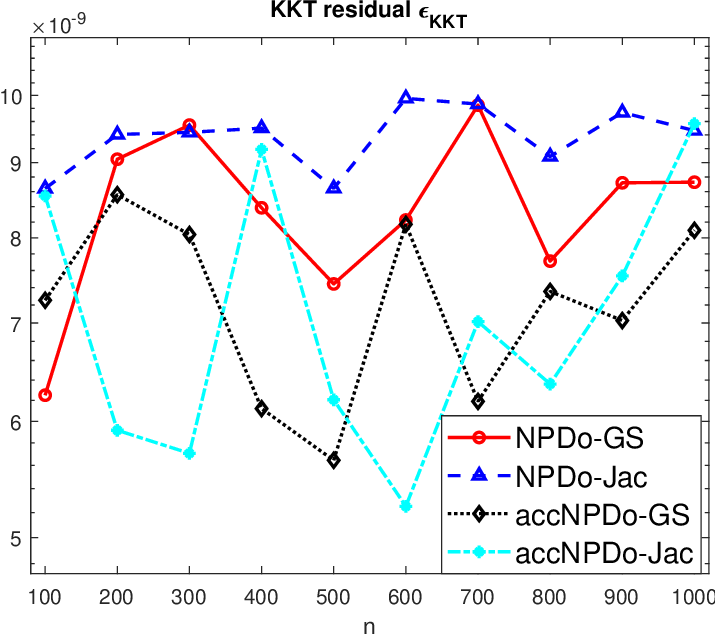}}
  & \resizebox*{0.21\textwidth}{0.14\textheight}{\includegraphics{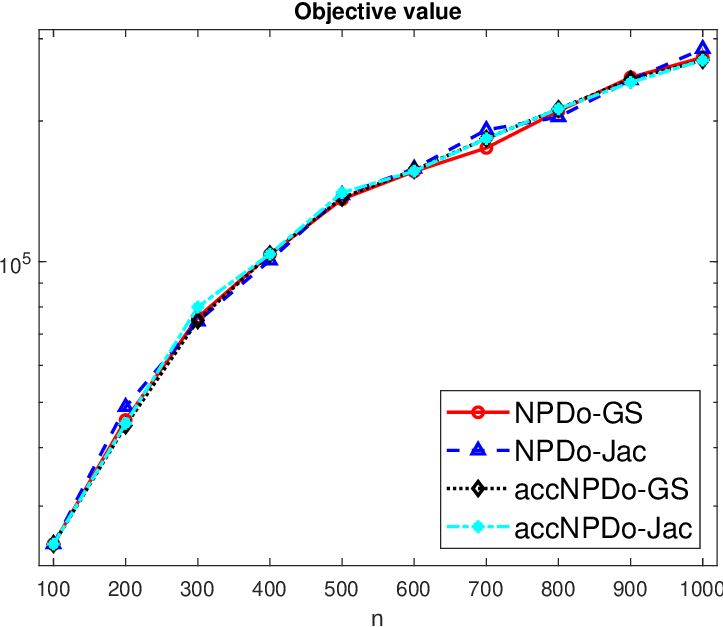}}
  & \resizebox*{0.21\textwidth}{0.14\textheight}{\includegraphics{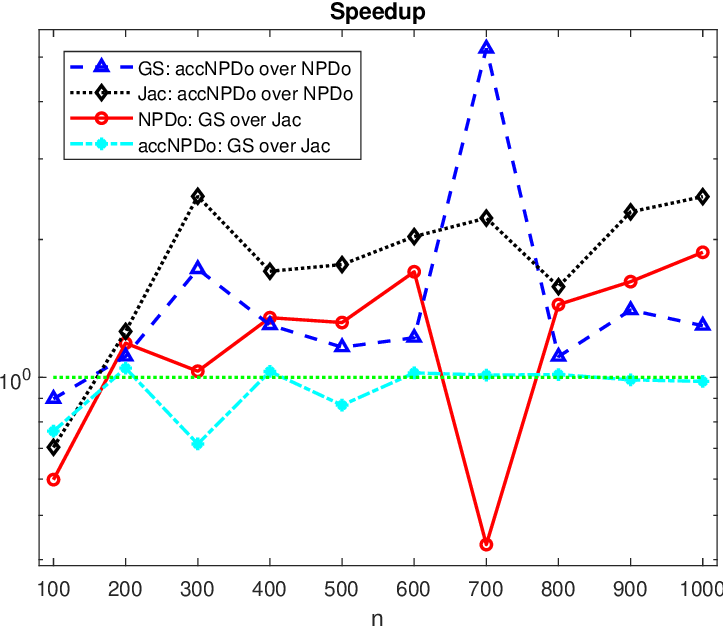}} \\
\rotatebox{90}{\hspace*{1.2cm}$\eta=10^{-1}$} &
\resizebox*{0.21\textwidth}{0.14\textheight}{\includegraphics{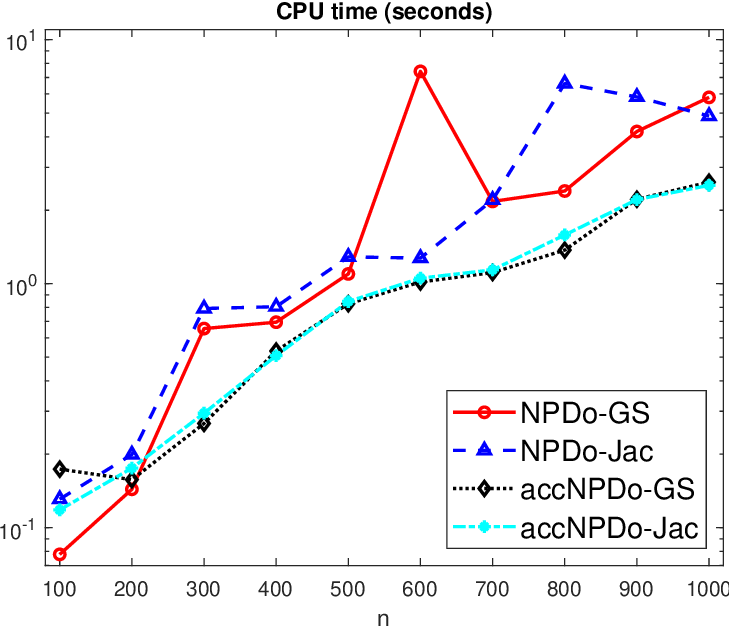}}
  & \resizebox*{0.21\textwidth}{0.14\textheight}{\includegraphics{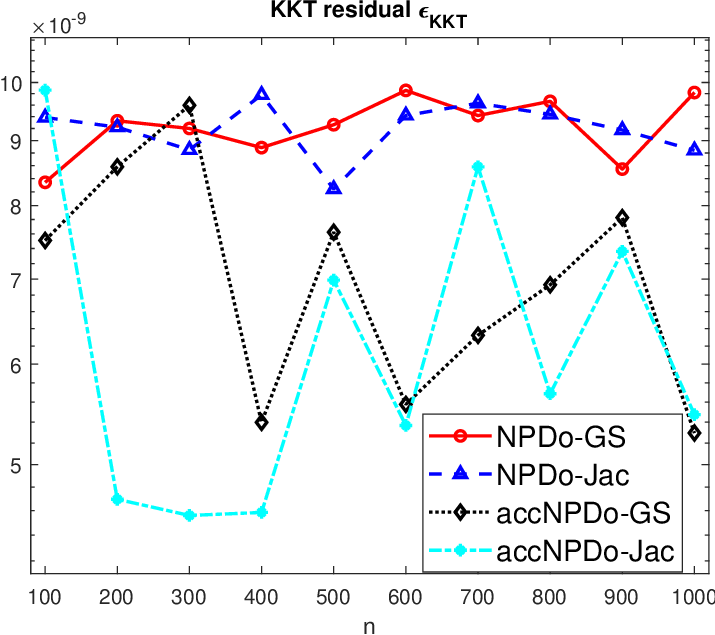}}
  & \resizebox*{0.21\textwidth}{0.14\textheight}{\includegraphics{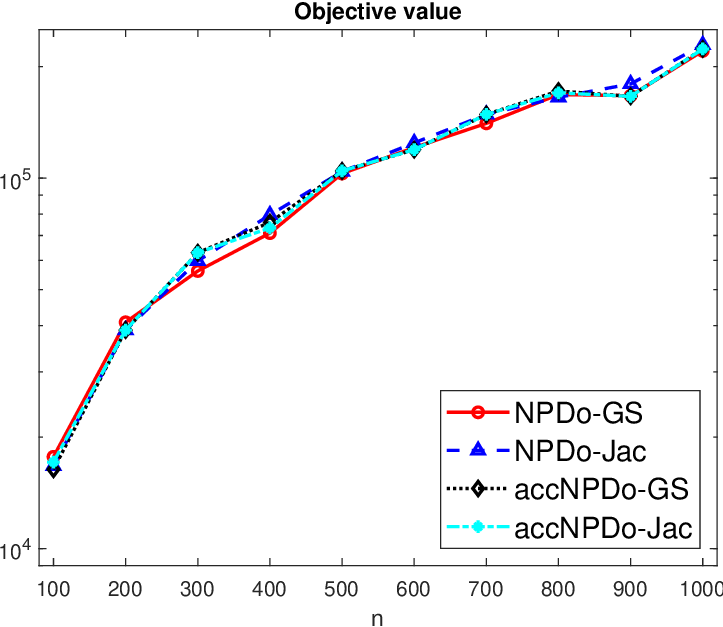}}
  & \resizebox*{0.21\textwidth}{0.14\textheight}{\includegraphics{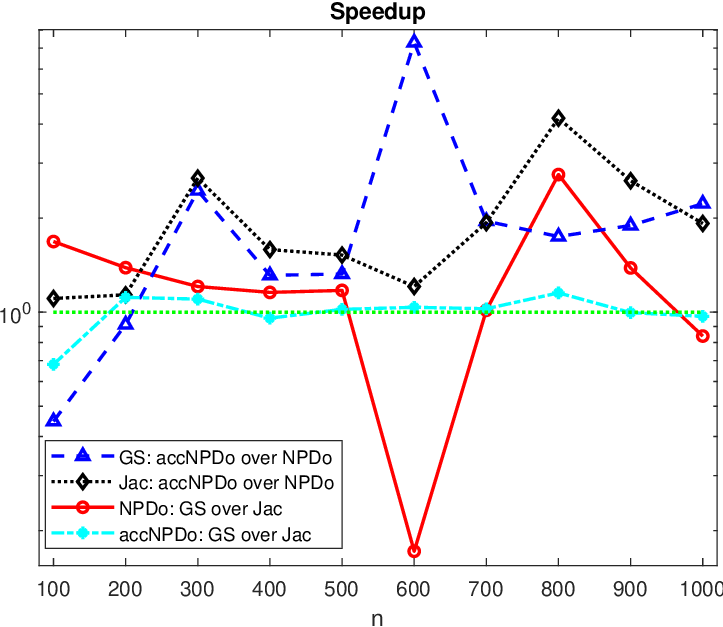}} \\
\rotatebox{90}{\hspace*{1.2cm}$\eta=10^{-2}$} &
\resizebox*{0.21\textwidth}{0.14\textheight}{\includegraphics{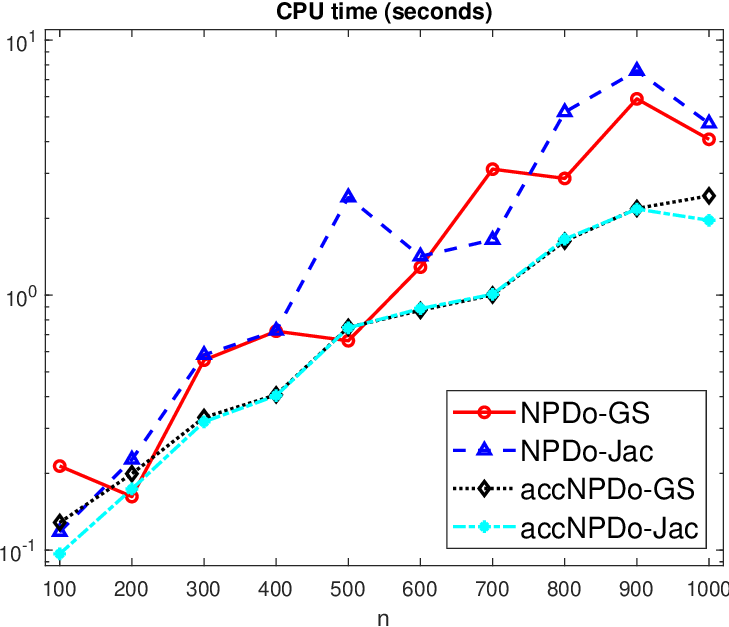}}
  & \resizebox*{0.21\textwidth}{0.14\textheight}{\includegraphics{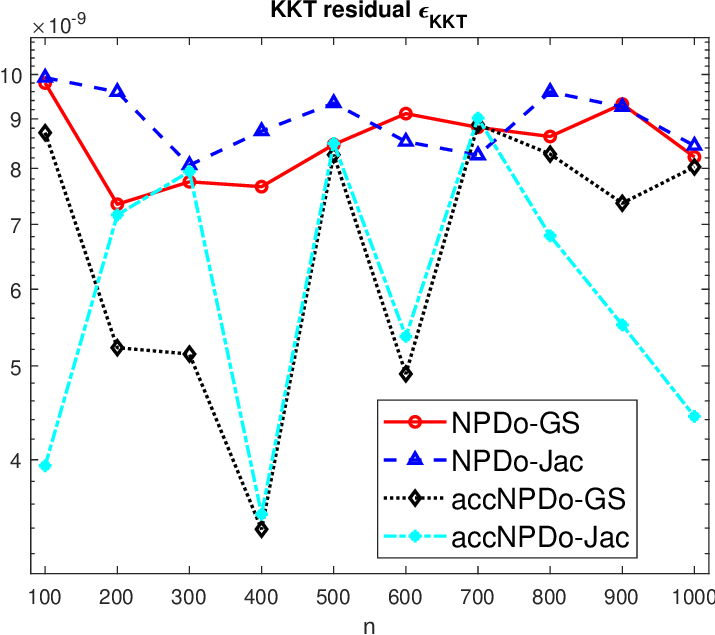}}
  & \resizebox*{0.21\textwidth}{0.14\textheight}{\includegraphics{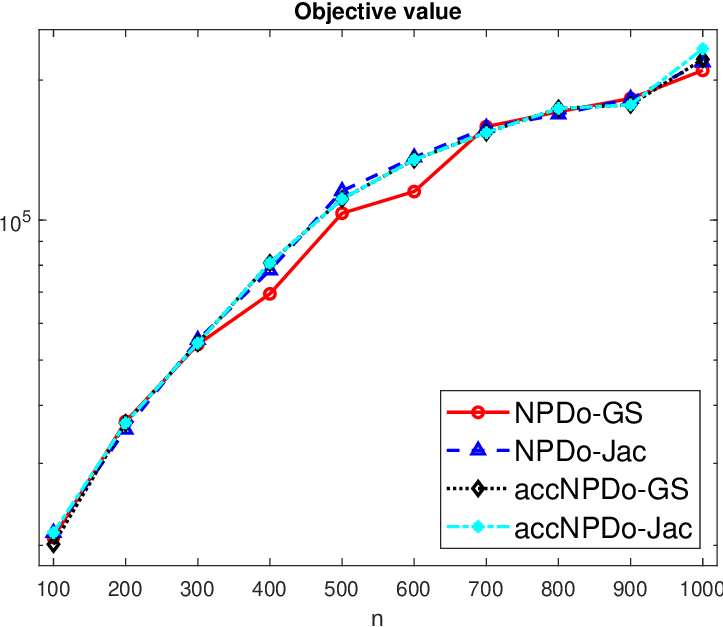}}
  & \resizebox*{0.21\textwidth}{0.14\textheight}{\includegraphics{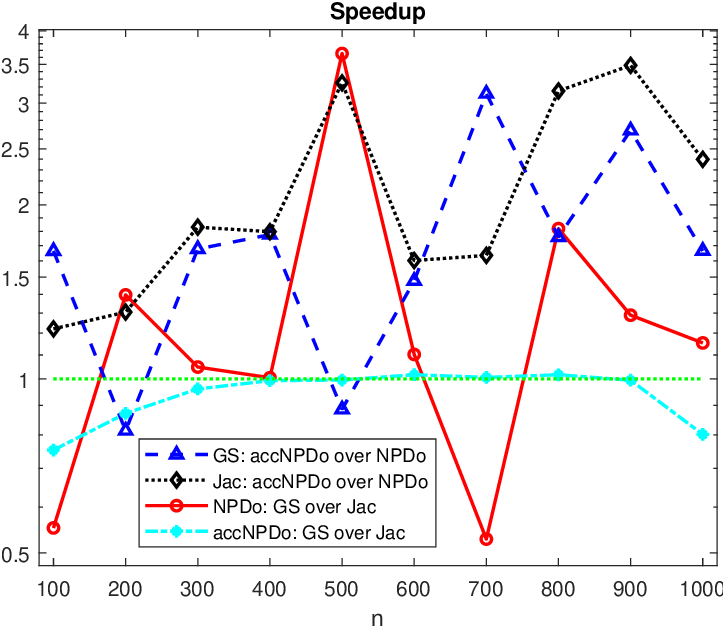}}  \\
\rotatebox{90}{\hspace*{1.2cm}$\eta=10^{-3}$} &
\resizebox*{0.21\textwidth}{0.14\textheight}{\includegraphics{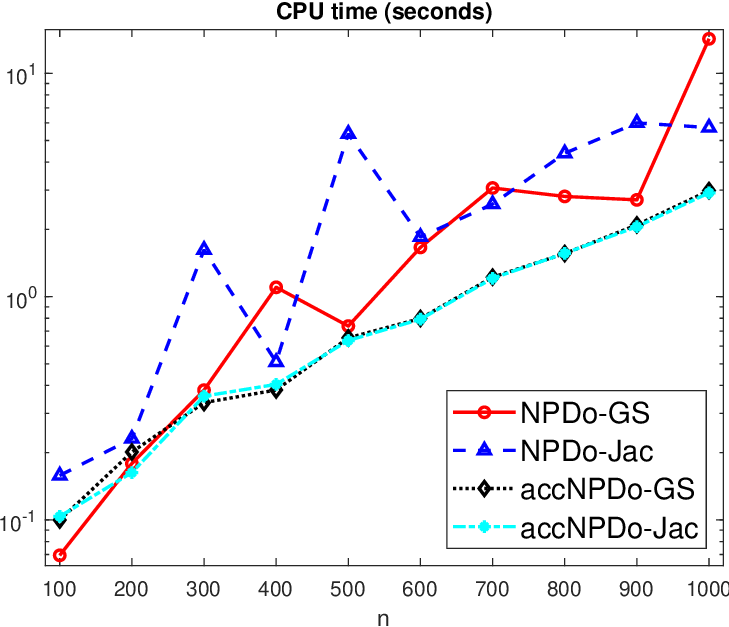}}
  & \resizebox*{0.21\textwidth}{0.14\textheight}{\includegraphics{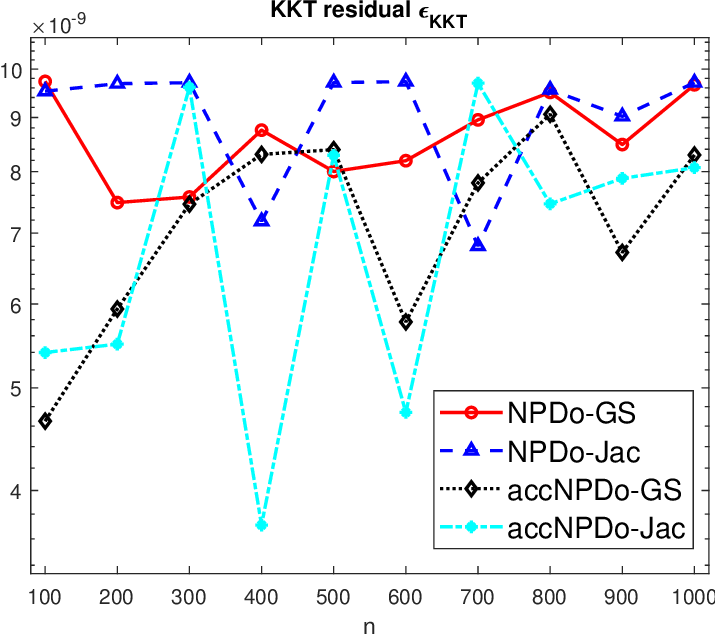}}
  & \resizebox*{0.21\textwidth}{0.14\textheight}{\includegraphics{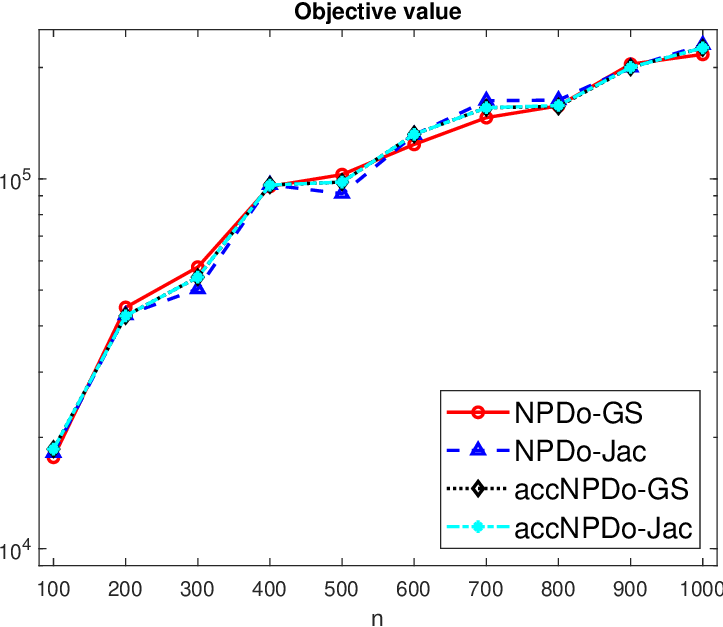}}
  & \resizebox*{0.21\textwidth}{0.14\textheight}{\includegraphics{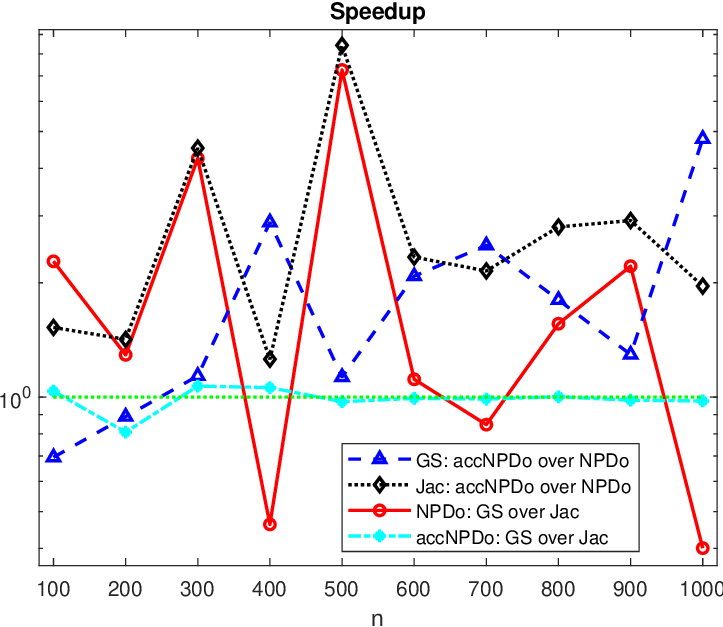}}
\end{tabular}\par
}
\vspace{-0.15 cm}
\caption{\small
   Performance for \pjbsvd\ by NPDo and accNPDo combined with either Gauss-Seidel-type and Jacobi-type updating
   on {\em complex\/} $B_{\ell}$ generated according to
   \eqref{eq:aJSVD}, where  each $k_i=2$, $t=5$, $k=2t=10$, $N=10$, and $n$ varies from $10^2$ to $10^3$ and $n_1=1.1n_2$.
  }
\label{fig:aJBSVD-n-vary}
\end{figure}

\section{Concluding Remarks}\label{sec:concl}
We are interested in the principal joint SVD-type block-diagonalization (\pjbsvd) of several matrices, which becomes
the principal joint SVD-type diagonalization (\pjsvd) when each block size is 1-by-1.
Specifically, given $B_{\ell}\in\bbC^{n_1\times n_2}$ for $1\le \ell\le N$, we seek two orthonormal matrices $U\in\bbC^{n_1\times k}$ and $V\in\bbC^{n_2\times k}$ such that all $U^{\HH}B_{\ell}V$ are block-diagonal with the same given block-diagonal structure in the dominant way.
We propose an NPDo (nonlinear polar decomposition with orthonormal polar factor dependency)
approach for that purpose. It includes an alternating self-consistent-field (ASCF) iteration to
numerically solve the equations from the KKT condition and its comvergence theory.
It is shown the SCF iteration combined with Gauss-Seidel-type updating is globally convergent to a stationary point while the objective increases monotonically. Convergence of ASCF with Jacobi-type updating is also analyzed.
An accelerated version of the approach via the locally optimal conjugate gradient (LOCG) technique is also established.
Numerical experiments are presented to illustrate the efficiency of the NPDo approach.
Our focus in this paper is on algorithmic development and we refer the reader to \cite{copp:2010} for
an application of \pjsvd.

Our joint SVD-type block-diagonalization so far enforces two things: 1)  both $U$ and $V$ have the same number of columns, i.e., $k$ of them,
and 2) both column-partitioned in the same way as in \eqref{eq:P-part'n} (and thus each block is of square shape).
We remark that these two requirements are not necessary as far as the theory and
numerical methods are concerned, but made so because that is what is likely the case in practice. In fact,
our theory and numerical methods are equally valid for more general case: $U\in\STM{k}{n_1}$,   $V\in\STM{k'}{n_2}$,
each $U^{\HH}B_{\ell}V$ partitioned
in the form
$$
\kbordermatrix{ &\sss k_1' & \sss k_2' & \sss \cdots &\sss k_t' \\
         \sss k_1 & A_{11} & A_{12} & \cdots & A_{1t} \\
         \sss k_2 & A_{21} & A_{22} & \cdots & A_{2t} \\
         \sss \vdots & \vdots & \vdots &  & \vdots \\
         \sss k_t & A_{t1} & A_{t2} & \cdots & A_{tt} }
$$
according to partition $\tau_k$ of $k$ in \eqref{eq:tau-n} and partition $\tau_{k'}=(k_1',\dots,k_t')$ of $k'$,
and correspondingly
$$
\BDiag_{(\tau_k,\tau_{k'})}(A)=\diag(A_{11},\dots,A_{tt}).
$$
Our earlier setting is a special case: $k=k'$ and $\tau_k=\tau_{k'}$. Whether this more general setting has a practical application
remains to be seen.

{\small
\def\noopsort#1{}\def\l{\char32l}\def\v#1{{\accent20 #1}}
  \let\^^_=\v\def\hbk{hardback}\def\pbk{paperback}

}

\end{document}